\documentclass[11pt]{amsart}

\usepackage[a4paper,margin=1.08in]{geometry}
\usepackage{amsmath,amssymb,amsthm,mathtools,mathrsfs}
\usepackage{tikz-cd}
\usepackage{microtype}
\usepackage{enumitem}
\usepackage{hyperref}

\hypersetup{
  colorlinks=true,
  linkcolor=blue!40!black,
  citecolor=blue!40!black,
  urlcolor=blue!50!black
}

\allowdisplaybreaks
\setlist[enumerate]{leftmargin=2.2em,itemsep=0.15em,topsep=0.35em}

\newtheorem{thm}{Theorem}[section]
\newtheorem{prop}[thm]{Proposition}
\newtheorem{lem}[thm]{Lemma}

\theoremstyle{definition}
\newtheorem{defn}[thm]{Definition}
\newtheorem{rem}[thm]{Remark}

\numberwithin{equation}{section}

\newcommand{\bbC}{\mathbb{C}}
\newcommand{\bbR}{\mathbb{R}}
\newcommand{\cO}{\mathcal{O}}
\newcommand{\Nil}{\operatorname{Nil}}
\newcommand{\PI}{\operatorname{PI}}
\newcommand{\NI}{\operatorname{NI}}
\newcommand{\reg}{\mathrm{reg}}
\newcommand{\geom}{\mathrm{geom}}
\newcommand{\Ad}{\operatorname{Ad}}
\newcommand{\Tr}{\operatorname{Tr}}

\usepackage{graphicx} 

\title{Local Gan-Gross-Prasad conjecture for unitary groups: trace formula approach for epsilon dichotomy}

\begin{document}

\author{Cheng Chen}
\address{Department of Mathematics, University of Hong Kong, Hong Kong}
\email{chen5968@hku.hk}

\author{Xiaowei Li}
\address{School of Mathematical Sciences, Peking University, 5 Yi He Yuan Road,
Haidian District, Beijing, 100871, China.}
\email{lixiaowei26@stu.pku.edu.cn}

\author{Bushi Ren}
\address{School of Mathematical Sciences, Zhejiang University, Zijingang Campus, 866 Yuhangtang Road, Hangzhou, China
}
\email{3441239820@qq.com}

\maketitle

\begin{abstract}
    The purpose of this paper is to provide a proof independent of the theta-lifting for the epsilon dichotomy of the local Gan–Gross–Prasad conjecture of real unitary groups for tempered $L$-parameters. The approach is based on the local trace formula and endoscopic character identities following Waldspurger and Chen-Luo.%
\end{abstract}
\tableofcontents
\newpage
\section{Introduction}\label{sec:introduction}

  Let $E/F=\mathbb C/\mathbb R$ and let \(W\subset V\) be a relevant pair of Hermitian spaces in the sense of \cite{GGP12}, with
\[
  \dim_{\mathbb C}V=\dim_{\mathbb C}W+2t+1,
  \qquad t\in \mathbb{Z}_{\geq 0}.
\]
Associated with this pair is the Bessel subgroup
\[
  H=\mathrm U(W)\ltimes N\subset \mathrm U(V)\times \mathrm U(W),
\]together with its standard generic character $\nu$. For irreducible tempered representations \(\pi\) of
\(\mathrm U(V)\) and \(\sigma\) of \(\mathrm U(W)\), we write
\[
  m(\pi,\sigma)=m(\pi,\sigma)
  :=
  \dim \operatorname{Hom}_{H}
  \bigl(\pi\widehat\otimes\sigma,\nu\bigr).
\]

When $t=0$, the group $N$ is trivial, and the model becomes the branching problem from \(\mathrm U(V)\) to \(\mathrm U(W)\). If both unitary
groups are compact, the nonvanishing of this space is governed by the
classical interlacing rule for highest weights. For arbitrary real unitary
groups, one still has multiplicity at most one (\cite{SZ12}\cite{JSZ10}), while the local
Gan--Gross--Prasad conjecture predicts which member of a relevant Vogan
packet has nonzero multiplicity in terms of local root numbers.

The tempered conjecture for nonarchimedean unitary groups was proved by
Beuzart-Plessis using the local trace formula and endoscopy \cite{BP15}.
In the real case, his local trace formula gives a geometric expression for
the multiplicity and, as a consequence, the uniqueness of the distinguished
member inside a relevant tempered Vogan packet \cite{BP20}. The explicit
root-number description of that member was subsequently proved by Xue
\cite{Xue23}, by a different method based on theta correspondence.

The aim of the present paper is to recover the epsilon dichotomy directly
from the geometric multiplicity formula for the completeness of the
trace-formula approach to the tempered conjecture for real unitary groups. The organization of the
reduction is parallel to the endoscopic strategy used for real special
orthogonal groups in \cite{CL22}, but the geometry of Hermitian spaces and
real unitary endoscopy requires separate treatment.

We now state the main result. Fix the Whittaker normalization and the
additive character \(\psi_{\mathbb C}\) used in \cite{Ato20}, and write
\[
  \varepsilon(\rho,\psi_{\mathbb C})
  =
  \varepsilon\!\left(\frac12,\rho,\psi_{\mathbb C}\right).
\]
Let
\[
  \varphi=\bigoplus_{i=1}^{k}m_i\chi_i,
  \qquad
  \varphi'=\bigoplus_{j=1}^{\ell}n_j\mu_j
\]
be tempered parameters for \(\mathrm U(V)\) and \(\mathrm U(W)\),
respectively. After relabeling, suppose that
\(\chi_1,\ldots,\chi_r\), respectively
\(\mu_1,\ldots,\mu_s\), are precisely the distinct conjugate self-dual
characters having the sign required by the corresponding unitary group.
We write
\[
  A_{\varphi}
  =
  \bigoplus_{i=1}^{r}(\mathbb Z/2\mathbb Z)a_i,
  \qquad
  A_{\varphi'}
  =
  \bigoplus_{j=1}^{s}(\mathbb Z/2\mathbb Z)b_j.
\]

\begin{thm}[Tempered epsilon dichotomy]\label{thm:main}
Let \((V_{\alpha},W_{\alpha})\) range over the relevant pure inner forms
of \((V,W)\). Let
\[
  \pi=\pi(\varphi,\eta)\in\Pi_{\varphi}^{V_{\alpha}},
  \qquad
  \sigma=\pi(\varphi',\eta')\in\Pi_{\varphi'}^{W_{\alpha}}.
\]
Then
\[
  m_{V_{\alpha},W_{\alpha}}(\pi,\sigma)=1
\]
if and only if
\[
  \eta(a_i)
  =
  \varepsilon(\chi_i\otimes\varphi',\psi_{\mathbb C}),
  \qquad 1\leq i\leq r,
\]
and
\[
  \eta'(b_j)
  =
  \varepsilon(\varphi\otimes\mu_j,\psi_{\mathbb C}),
  \qquad 1\leq j\leq s.
\]
In particular, among all packet members on all relevant pure inner forms,
there is a unique distinguished pair.
\end{thm}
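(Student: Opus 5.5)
We follow the endoscopic strategy of Waldspurger and of \cite{CL22}, starting from Beuzart-Plessis' geometric multiplicity formula \cite{BP20}. For tempered $\pi,\sigma$ on $(V_\alpha,W_\alpha)$ it reads
\[
m_{V_\alpha,W_\alpha}(\pi,\sigma)=\lim_{s\to0^+}\int_{\Gamma(H_\alpha)} D^{H}(x)^{1/2}\, c_{\theta_\pi}(x)\,c_{\theta_\sigma}(x)\,\Delta(x)^{s-1/2}\,dx,
\]
where $c_\theta$ are germs of characters at semisimple elliptic points. Combined with the uniqueness statement of \cite{BP20}, this reduces Theorem \ref{thm:main} to the following claim. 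For each $(\eta,\eta')$ one considers the twisted packet sum
\[
\mathcal M(\eta,\eta')=\sum_{\alpha}\sum_{\pi,\sigma}\eta(\pi)\eta'(\sigma)\,m_{V_\alpha,W_\alpha}(\pi,\sigma),
\]
and it suffices to show that $\mathcal M(\eta,\eta')$ equals the product of the predicted epsilon factors, evaluated at the elements $s=\sum c_i a_i$, $s'=\sum d_j b_j$ corresponding to $\eta,\eta'$. Indeed, by finite Fourier inversion on $A_\varphi\times A_{\varphi'}$, the multiplicity function is then the indicator of the character $a_i\mapsto\varepsilon(\chi_i\otimes\varphi',\psi_{\mathbb C})$, $b_j\mapsto\varepsilon(\varphi\otimes\mu_j,\psi_{\mathbb C})$. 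Existence and uniqueness of the distinguished pair follow at once.

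\textbf{Step 1: endoscopic character identities.} Following \cite{Ato20}, the $\eta$-weighted sum over pure inner forms of $\Theta_\pi$ (with Kottwitz signs) equals the transfer of the stable character of the parameter on the elliptic endoscopic group $\mathrm U(n_1)\times\mathrm U(n_2)$ attached to $s$. Insert these identities, with transfer factors, into the geometric formula summed over $\alpha$. The elliptic tori of $H_\alpha$ are parametrized by decompositions $W=W'\oplus W''$ with $W''$ anisotropic, together with the regular element on $W''$. Summing over $\alpha$ should collapse each integral into a stable integral over the endoscopic side. I expect this collapse to produce the multiplicity formula for the product of two "smaller'' GGP problems, indexed by $(V_1,W_1)\times(V_2,W_2)$, plus a sign $\omega$ coming from the transfer factor. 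Its germ-level ingredients are the Harish-Chandra descent of $c_\theta$ to regular nilpotent germs, and the fact that transfer commutes with these germs (Waldspurger's lemmas, adapted to $\mathbb C/\mathbb R$).

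\textbf{Step 2: reduction to the stable case.} The stable multiplicity for a parameter equals $1$. This follows by Whittaker/generic considerations: the sum over all relevant inner forms of the multiplicities in an $L$-packet is $1$ by \cite{BP20}, and the stable formula computes exactly that sum. Hence $\mathcal M(\eta,\eta')$ equals the sign $\omega(s,s')$ produced in Step 1.

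\textbf{Step 3: computing the sign.} It remains to show
\[
\omega(s,s')=\prod_i\varepsilon(\chi_i\otimes\varphi',\psi_{\mathbb C})^{c_i}\prod_j\varepsilon(\varphi\otimes\mu_j,\psi_{\mathbb C})^{d_j}.
\]
The left side is a product of transfer factor values, Kottwitz signs $e(G_\alpha)$, and the discriminant normalizations $\Delta$ at the elliptic points. The right side involves only characters $z\mapsto (z/\bar z)^{k/2}$, whose root numbers are explicit powers of $i$ determined by the half-integers $k$ and the interlacing order. The plan is to evaluate both sides on a compact Cartan: read off the transfer factor by the Langlands–Shelstad formula in terms of the Harish-Chandra parameters, and match it with the root numbers via $\varepsilon(\chi_k\chi_l^{-1})=\pm i$ according to the sign of $k-l$.

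\textbf{Main obstacle.} The hardest step is this final sign bookkeeping. One must keep consistent Whittaker normalization, the signs $e(G_\alpha)$, and the $\psi_{\mathbb C}$-dependence, and then check that the discriminant factors and transfer factors combine into exactly the interlacing count. I would first verify the compact case $t=0$ against the classical interlacing rule to fix all normalizations, and then argue in general by induction on $\dim W$ using the descent of Step 1.
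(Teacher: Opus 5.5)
Your reformulation via the weighted sums $\mathcal M(s,s')$ and the collapse in Step~1 match the paper's framework. The gap is Step~2: its claim is false, and Step~3 inherits the problem.

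Beuzart-Plessis's uniqueness result says that the \emph{unweighted} sum of multiplicities over the Vogan packet and the relevant pure inner forms is $1$. The stable terms produced by the collapse are different objects. Only after adding the two fixed-Kottwitz-sign identities (the uncrossed product carries the coefficient $e_0$ and cancels) does one get $\mathcal M(s,s')=\gamma^V_{\mu_+,\mu_-}\gamma^W_{\mu'_+,\mu'_-}S_{\mu_+\mu'_-}(\Sigma_{V_+},\Sigma_{W_-})S_{\mu_-\mu'_+}(\Sigma_{V_-},\Sigma_{W_+})$. This is a product over the \emph{crossed} pairs, one of Fourier--Jacobi type and one of Bessel type.

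The stable term of a genuine pair is itself a \emph{weighted} sum. Taking $s_V=z_\varphi$, $s_W=1$ gives $S_\mu(\Sigma_V,\Sigma_W)=\sum_\alpha m(\Sigma^{1}_{W_\alpha,\varphi'},\Sigma^{z_\varphi}_{V_\alpha,\varphi})$, where the weight $\eta_\pi(z_\varphi)$ changes sign with the pure inner form. By the theorem itself this equals $\eta(z_\varphi)$ of the distinguished member. That is a root number, often $-1$: for instance, whenever the distinguished member is the finite-dimensional representation of $\mathrm U(0,2)$. Evaluating these factors is the whole problem.

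Hence no sign $\omega(s,s')$ as in Step~3 can work. The constants left by the geometric collapse (transfer-factor normalizations such as $\gamma^V\gamma^W$, Kottwitz signs) depend only on the endoscopic data, i.e.\ on block dimensions. Root numbers, by contrast, depend on the relative position of the weights. Take $\varphi=\chi_{\ell+n-1}\oplus\chi_{\ell-n+1}$ on $\mathrm U(1,1)$, $\mathrm U(0,2)$ and $\varphi'=\chi_{2m}$. Then $\mathcal M(e_+,1)=-1$ for $m\ge\frac{n-\ell}{2}$ but $+1$ for $m\le-\frac{n+\ell}{2}$, with all endoscopic data fixed. This variation lives entirely in the stable factors you set equal to $1$. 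Harish-Chandra parameters enter only through the pairings $\eta_\pi(s)$, which are inputs to the character identities, not outputs.

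What is missing is a way to evaluate the stable factors. One option is to compute $S_\mu$ independently as a root number, roughly the $p$-adic route of Waldspurger and Beuzart-Plessis via integral formulas for root numbers on twisted $\mathrm{GL}$.

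The paper instead inducts on rank. Each stable factor of a smaller pair is itself a weighted multiplicity sum, hence known by induction. The descent removes either one constituent from each of $\varphi_{V,-}$ and $\varphi'_{W,+}$, or two distinct constituents from $\varphi_{V,-}$. In the first case one twists by $\xi^{\pm1}$ with $\xi|_{\bbR^\times}=\operatorname{sgn}$, so that $\mu_+\mu'_-$ is unchanged. In both cases $S_{\mu_+\mu'_-}(\Sigma_{V_+},\Sigma_{W_-})$ is untouched and cancels, giving $\mathcal M_{\mathrm{old}}=\mathcal M_{\mathrm{new}}\,S_{\mathrm{old}}/S_{\mathrm{new}}$ with only smaller-rank data on the right. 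This is then matched with the predicted character by multiplicativity of root numbers.

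The remaining steps of the paper are:
\begin{enumerate}
\item parabolic reduction to good parity;
\item lowering multiplicities to at most two;
\item a component-group argument reaching $|A_\varphi|,|A_{\varphi'}|\le4$;
\item the base case $\mathrm U(1)\subset\mathrm U(1,1)$ and $\mathrm U(0,1)\subset\mathrm U(0,2)$, settled by explicit $\mathrm U(1)$-weights.
\end{enumerate}

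Your closing idea of inducting via the descent points the right way, but the ratio mechanism that makes it work is absent. Also, a compact-case check against interlacing cannot serve as the base, since the distinguished member generally lives on a noncompact form.
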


\subsubsection*{Outline of the proof}

The starting point is the geometric multiplicity formula of
Beuzart-Plessis \cite{BP20}. If \(\Theta\) is a tempered character, its
multiplicity is expressed as an integral over a space of semisimple
$H(\mathbb R)$-conjugacy classes, with the integrand determined by the regular
nilpotent germs of \(\Theta\). To make this formula compatible with
endoscopic transfer, we first describe the regular semisimple classes in
real unitary groups using parameter data $(\xi,c)$.
The stable class is determined by \(\xi\), whereas the sign vector \(c\)
records the ordinary conjugacy class within the stable class.

Over \(\mathbb C/\mathbb R\), the signature of the associated Hermitian
space is an explicit sum of the coordinates of \(c\). The Kottwitz sign is
governed, up to a factor depending only on the dimension, by their product.
This gives a concrete description of the union of regular semisimple
classes over pure inner forms with a prescribed Kottwitz sign. This
fixed-sign organization is the first specifically archimedean ingredient
in the argument.

We then combine this parametrization with Shelstad's endoscopic character
identities. For elements
\[
  s_V\in A_{\varphi},
  \qquad
  s_W\in A_{\varphi'},
\]
write the corresponding endoscopic decompositions as
\[
  \varphi
  =
  \mu_+\varphi_+\oplus\mu_-\varphi_-,
  \qquad
  \varphi'
  =
  \mu'_+\varphi'_+\oplus\mu'_-\varphi'_-.
\]
Let \(\Sigma_{V_{\pm}}\) and \(\Sigma_{W_{\pm}}\) denote the stable virtual
characters of the four endoscopic factors, and let \(S_{\lambda}\) denote
the stable geometric multiplicity with twisting character \(\lambda\).
The key geometric identity is the factorization
\[
\begin{aligned}
  \sum_{\alpha}
  m_{V_{\alpha},W_{\alpha}}
  \bigl(
     \Sigma^{s_V}_{V_{\alpha},\varphi},
     \Sigma^{s_W}_{W_{\alpha},\varphi'}
  \bigr)
  =
  \gamma^V_{\mu_+,\mu_-}\gamma^W_{\mu_+',\mu_-'}S_{\mu_+\mu'_-}
  \bigl(\Sigma_{V_+},\Sigma_{W_-}\bigr)
  \,
  S_{\mu_-\mu'_+}
  \bigl(\Sigma_{V_-},\Sigma_{W_+}\bigr),
\end{aligned}
\]
where the sum runs over the relevant pure inner forms.

This formula converts a weighted sum of the multiplicity function on $A_{\varphi}\times A_{\varphi'}$ into a product of stable terms attached to two crossed Gan--Gross--Prasad pairs. It is the mechanism that makes induction on the
rank possible. 

This approach is different from other cases in the following sense:
\begin{itemize}
    \item Over $\mathbb R$, the use of fixed Kottwitz signs is essential before summing
over the pure inner forms since a stable conjugacy class may contain ordinary conjugacy classes belonging to several different
signatures.
\item For unitary groups, one of the $S_{\mu_+\mu'_-}
  \bigl(\Sigma_{V_+},\Sigma_{W_-}\bigr)$ and $S_{\mu_-\mu'_+}
  \bigl(\Sigma_{V_-},\Sigma_{W_+}\bigr)$ corresponds to the multiplicity of the GGP model of Bessel type, while the other corresponds to the multiplicity the GGP model of Fourier-Jacobi type.
\end{itemize} 

The remainder of the proof consists of four reductions.

\begin{itemize}
    \item First, the compatibility of tempered parabolic induction with packets, multiplicities,
and epsilon factors removes all bad-parity constituents. We may therefore reduce to parameters of good
parity.

\item Second, suppose that a good-parity constituent occurs with a multiplicity of at least three. Two copies of that constituent can be removed by a parabolic reduction. 
identified with the original one. 
\item The third reduction is a pointwise endoscopic reduction to smaller GGP pairs of Bessel type. 
\item  The fourth reduction removes two distinct constituents from one endoscopic
block. 
\end{itemize}

Together with the character structure of the component groups, these
reductions imply that it is enough to consider
\[
  |A_{\varphi}|\leq 4,
  \qquad
  |A_{\varphi'}|\leq 4.
\]
Equivalently, each parameter has at most two distinct good-parity
constituents, and each constituent occurs with a multiplicity of at most two.
A final component-group argument reduces all remaining configurations to
one basic case.

In that basic case, one parameter is one-dimensional, and the other is the
sum of two distinct characters. The relevant packet consists of the
holomorphic and antiholomorphic discrete series of
\(\mathrm U(1,1)\), together with a finite-dimensional representation of
\(\mathrm U(0,2)\). Restricting these representations to
\(\mathrm U(1)\) gives three mutually exclusive ranges of weights. A
direct calculation of the corresponding epsilon factors selects exactly
the same representation in each range. This proves
Theorem~\ref{thm:main}.

\subsubsection*{Organization of the paper}

In Section~\ref{sec:geom-param}, we recall the geometric multiplicity
formula, parameterize regular semisimple classes in real unitary groups,
describe the signature and Kottwitz-sign constraints, and establish the
geometric endoscopic reduction formula.

In Section~\ref{sec:reduction-basic-cases}, we carry out the parabolic and endoscopic
reductions. We first reduce to good-parity parameters, then bound the
multiplicities of their irreducible constituents, and finally reduce the
sizes of the component groups to the basic configurations.

In Section~\ref{sec:basic-case}, we verify the remaining basic case by an
explicit branching calculation for \(\mathrm U(1)\) and compare the result
with the required local epsilon factors.

\subsubsection*{Acknowledgement} 

This project was completed during the undergraduate summer school of number theory and representation theory in August 2026, which is supported by the New Cornerstone Foundation at Peking University and the Shanghai Red Avenue Foundation. The first author is supported by the European Union’s Horizon 2020 research and innovation program under the Marie Skłodowska-Curie grant agreement No. 101034255.  The draft of this paper was produced from handwritten notes with GPT 5.6, and GPT Astra was used for proofreading. The proofs and the calculations in the article are not AI-assisted.   

\section{Notations and Preliminaries}
\label{sec:geom-param}

Throughout this section, let $(W,V)$ be a relevant pair of Hermitian spaces over
$\bbC/\bbR$.  For the relative GGP model, we write
\[
 G=U(V)\times U(W),\qquad H=U(W)\ltimes N.
\]
Thus $\Gamma(G,H)$ and the multiplicity $m(\ ,\ )$ refer to this relative pair.
When discussing conjugacy classes, packets, or endoscopic transfer, we write the two unitary factors $U(V)$ and $U(W)$ separately.  We recall the geometric
multiplicity formula and the regular semisimple parametrization used below; the parametrization is the real specialization of \cite[\S2]{BP15}.

\subsection{Relevant unitary GGP pairs}
\label{subsec:ggp-pairs}

We begin with the GGP datum, including the normalization of its Bessel
character. 

Put $n=\dim_{\bbC}W$.  Relevance means that there is an orthogonal
decomposition
\begin{equation}
  V=W\oplus^{\perp}Z,
  \qquad \dim_{\bbC}Z=2t+1,
  \label{eq:relevant-decomposition}
\end{equation}
in which $Z$ is quasi-split.  We choose a basis
\[
  z_{-t},\ldots,z_{-1},z_0,z_1,\ldots,z_t
\]
of $Z$ such that
\begin{equation}
  h_V(z_i,z_j)=(-1)^n\delta_{i,-j}.
  \label{eq:relevant-basis}
\end{equation}
Let $P\subset U(V)$ be the parabolic subgroup stabilizing the flag
\[
  \langle z_t\rangle\subset
  \langle z_t,z_{t-1}\rangle\subset\cdots\subset
  \langle z_t,\ldots,z_1\rangle,
\]
and let $N$ be its unipotent radical.  The diagonal copy of $U(W)$ in
$U(V)\times U(W)$ normalizes $N$.  We therefore set
\begin{equation}
  G=U(V)\times U(W),
  \qquad H=\Delta U(W)\ltimes N.
  \label{eq:ggp-groups}
\end{equation}
Here $(h,u)\in U(W)\ltimes N$ is embedded as $(hu,h)$ in $G$,
where $h$ acts trivially on $Z$.
For $u\in N(\bbR)$ define
\begin{equation}
  \nu(u)=
  \psi_{\bbR}\!\left(
    \operatorname{Tr}_{\bbC/\bbR}
    \sum_{i=0}^{t-1}h_V(z_{-i-1},uz_i)
  \right),
  \label{eq:bessel-character}
\end{equation}
and extend $\nu$ trivially across $\Delta U(W)$.  Changing the basis
subject to \eqref{eq:relevant-basis} gives an isomorphic GGP triple
$(G,H,\nu)$.  If $t=0$, then $N=1$ and this is the usual
branching model for $U(W)\subset U(V)$.

Let $\pi$ and $\sigma$ be irreducible tempered Casselman
–Wallach representations of $U(V)$ and $U(W)$, respectively, and put
$\Pi=\pi\widehat\otimes\sigma$.  The 
\emph{spectral GGP multiplicity} is
\begin{equation}
  m(\Pi)
  :=\dim\operatorname{Hom}_{H(\bbR)}
       (\Pi,\nu).
  \label{eq:spectral-multiplicity}
\end{equation}
In the notation of the Introduction,
\begin{equation}
  m(\pi,\sigma)
  :=m(\pi\widehat\otimes\sigma).
  \label{eq:mVW-convention}
\end{equation}
We extend $m$ bilinearly to finite virtual tempered representations.
The multiplicity-one theorems of \cite{JSZ10,SZ12} imply
$m(\pi,\sigma)\leq1$.

We shall sum over relevant pure inner forms.  If
$\alpha\in H^1(\bbR,U(W))$, let $W_\alpha$ be the corresponding Hermitian
space, set
\begin{equation}
  V_\alpha=W_\alpha\oplus^{\perp}Z,
  \label{eq:relevant-pure-inner-form}
\end{equation}
and define $G_\alpha$, $H_\alpha$, and $\nu_\alpha$ from
$(V_\alpha,W_\alpha)$ by the construction above.  Thus the index $\alpha$
always changes both spaces in a compatible way; it never denotes two
independent inner twists.

\subsection{The geometric multiplicity formula}
\label{subsec:geom-mult}

Let $H_{\mathrm{ss}}(\bbR)$ be the set of semisimple elements of $H(\bbR)$.  Every
$H(\bbR)$-conjugacy class in $H_{\mathrm{ss}}(\bbR)$ meets $U(W)$.  For a semisimple
$x\in U(W)$, put
\[
 W_x'=\ker(1-x\mid_ W),\qquad
 W_x''=\operatorname{Im}(1-x\mid_ W),\qquad
 V_x'=\ker(1-x\mid_ V).
\]
Then
\[
 W=W_x'\oplus^{\perp}W_x'',\qquad
 V=V_x'\oplus^{\perp}W_x''.
\]
Writing $U(W_x'')_x$ for the centralizer of $x$ in $U(W_x'')$, the connected
centralizers split accordingly as
\begin{align}
 G_x&=G_x'\times G_x'',
 &G_x'&=U(W_x')\times U(V_x'),
 &G_x''&=U(W_x'')_x\times U(W_x'')_x,\label{eq:centralizer-G}\\
 H_x&=H_x'\times H_x'',
 &H_x'&=U(W_x')\ltimes N_x,
 &H_x''&=U(W_x'')_x.\label{eq:centralizer-H}
\end{align}
The space $\Gamma(G,H)$ is the set of semisimple $H(\bbR)$-conjugacy classes
for which $H_x''$ is an anisotropic torus.  We use the topology and measure on
$\Gamma(G,H)$ defined in \cite[\S11.1]{BP20}.

For $x\in G_{\mathrm{ss}}(\bbR)$ set
\begin{equation}
 D^G(x)=\left|\det\!\left(1-\Ad(x)\mid_{
             \mathfrak g/\mathfrak g_x}\right)\right|.
 \label{eq:DG-def}
\end{equation}
If $x\in\Gamma(G,H)$, define
\begin{equation}
 \Delta(x)=D^G(x)D^H(x)^{-2}.
 \label{eq:Delta-def}
\end{equation}
Equivalently, in the present unitary setting,
\[
 \Delta(x)=\left|N_{\bbC/\bbR}
   \bigl(\det(1-x\mid W_x'')\bigr)\right|.
\]

Let $\Theta$ be a quasi-character on $G(\bbR)$.  For a semisimple $x$, its
Harish--Chandra germ expansion at $x$ has coefficients
$c_{\Theta,\cO}(x)$ indexed by nilpotent orbits
$\cO\subset\mathfrak g_x$.  We shall only need the regular nilpotent
coefficients.  Following \cite[\S11.2]{BP20}, put
\begin{equation}
 c_\Theta(x)=
 \begin{cases}
 \displaystyle
 \frac{1}{|\Nil_{\reg}(\mathfrak g_x)|}
 \sum_{\cO\in\Nil_{\reg}(\mathfrak g_x)}c_{\Theta,\cO}(x),
 & \Nil_{\reg}(\mathfrak g_x)\neq\varnothing,\\[4mm]
 0,&\Nil_{\reg}(\mathfrak g_x)=\varnothing.
 \end{cases}
 \label{eq:cTheta-def}
\end{equation}
Thus $c_\Theta(x)=0$ when $G_x$ is not quasi-split. Denote $\Gamma^{\text{qs}}(G,H)\subset \Gamma(G,H)$ the subspace of elements which centralizer in G is quasi-split.

\begin{defn}\label{def:mgeom}
The geometric multiplicity of a quasi-character $\Theta$ is
\begin{equation}
 m_{\geom}(\Theta)
 =\lim_{s\to0^+}\int_{\Gamma(G,H)}
 D^G(x)^{1/2}c_\Theta(x)\Delta(x)^{s-1/2}\,dx.
 \label{eq:mgeom-def}
\end{equation}
The integral is absolutely convergent for $\Re(s)>0$, and the limit in
\eqref{eq:mgeom-def} exists; see \cite[Prop.~11.2.1]{BP20}.
\end{defn}

If $\pi$ is a tempered representation of $G(\bbR)$ with Harish--Chandra
character $\Theta_\pi$, set
\[
 m_{\geom}(\pi)=m_{\geom}(\Theta_\pi).
\]
The local trace formula of Beuzart-Plessis gives the following identity.

\begin{thm}[Beuzart-Plessis]\label{thm:BP-geometric-multiplicity}
For every tempered representation $\pi$ of $G(\bbR)$,
\begin{equation}
 m_{\geom}(\pi)=m(\pi).
 \label{eq:BP-mgeom=m}
\end{equation}
\end{thm}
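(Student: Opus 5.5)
This is Beuzart-Plessis's multiplicity formula \cite{BP20}, quoted rather than reproved here. If I had to prove it, I would follow his local trace formula strategy. I would proceed in three steps: introduce a truncated trace distribution, evaluate it spectrally and geometrically, and compare.

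\textbf{The distribution.} For a strongly cuspidal test function $f$ on $G(\bbR)$, first form the kernel
\[
K_f(g,g)=\int_{H(\bbR)} f(g^{-1}hg)\,\nu(h)\,dh,
\]
and then set
\[
J(f)=\int_{H(\bbR)\backslash G(\bbR)} K_f(g,g)\,dg.
\]
Absolute convergence of this integral has to be established via Harish-Chandra Schwartz estimates and the strong cuspidality of $f$. This is the setting in which both sides can be computed.

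\textbf{The two expansions.}
\begin{enumerate}
\item \emph{Geometric side.} Using the semisimple descent \eqref{eq:centralizer-G}, \eqref{eq:centralizer-H} and the Harish-Chandra germ expansion, show that
\[
J(f)=m_{\geom}(\theta_f),
\]
where $\theta_f$ is the quasi-character attached to $f$ through weighted orbital integrals. Only the regular germs $c_{\theta_f}$ survive, because the unipotent integration over $N$ against $\nu$ kills all non-regular contributions.
\item \emph{Spectral side.} Expand $K_f$ through the Plancherel formula, realizing $\pi\mapsto m(\pi)$ via the explicit invariant form $\mathcal L_\pi$ given by integrating matrix coefficients over $H$. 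Show that
\[
J(f)=\int_{\mathcal X_{\mathrm{ell}}}D(\pi)\,\theta_f(\pi)\,m(\bar\pi)\,d\pi .
\]
\end{enumerate}

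\textbf{The comparison.} Equating the two expansions gives $m_{\geom}=m$ on elliptic virtual representations. This uses that $\pi\mapsto m(\pi)$ and $m_{\geom}$ are both compatible with parabolic induction, which reduces to Levi subgroups, where the required model is again a GGP model. By induction, together with the density of the elliptic characters $\theta_f$, the identity extends to all tempered $\pi$.

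The main obstacle is the geometric side. Controlling the truncation and showing that the limit $s\to0^+$ in \eqref{eq:mgeom-def} matches the truncated integral requires delicate asymptotics near singular semisimple elements in the archimedean case. For all of this I would rely directly on \cite[\S\S 11--12]{BP20}.
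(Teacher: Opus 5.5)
Your proposal takes the same route as the paper: the paper gives no argument for this statement and simply quotes it from Beuzart-Plessis \cite{BP20}, and your proof likewise rests on citing \cite{BP20}, so the trace-formula roadmap is commentary rather than proof. Treat that roadmap as heuristic only. For instance, the restriction to the regular germs $c_\Theta(x)$ cannot be caused by the integration over $N$ against $\nu$, because the formula \eqref{eq:mgeom-def} has the same form when $t=0$ and $N=1$.
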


\section{Parametrization of the space $\Gamma(G,H)$}

\subsection{Regular semisimple parameters for unitary groups}
\label{subsec:unitary-regular-parameters}

We now recall only the part of the regular semisimple parametrization that will
be used below.  In particular, we do not attempt to rewrite the full formalism
of \cite[\S2]{BP15}.

Let $E=\bbC$ and $F=\bbR$.  A parameter datum is a tuple
\begin{equation}
 \xi=\bigl(I,(F_{\pm i})_{i\in I},(F_i)_{i\in I},(y_i)_{i\in I}\bigr)
 \label{eq:xi-datum}
\end{equation}
with the following properties:
\begin{enumerate}
 \item $I$ is a finite set;
 \item each $F_{\pm i}$ is a finite field extension of $F$, and
       \[
       F_i=F_{\pm i}\otimes_F E
       \]
       is viewed as a quadratic \`etale $F_{\pm i}$-algebra; we denote by
       $\tau_i$ the involution induced by complex conjugation on $E$;
 \item $y_i\in F_i^\times$ satisfies
       \begin{equation}
       y_i\tau_i(y_i)=1.
       \label{eq:yi-unitary}
       \end{equation}
\end{enumerate}
Two such data are identified if a bijection of their index sets can be equipped
with compatible $F$-algebra isomorphisms carrying every $y_i$ to the
corresponding $y_i'$.  Let $\Xi$ denote the set of isomorphism classes of these
data.  We call $\xi$ \emph{regular} if it has no nontrivial automorphism, and
write $\Xi_{\reg}$ for the regular locus.  For $d\geq0$ put
\begin{equation}
 \Xi_{\reg,d}
 =\left\{\xi\in\Xi_{\reg}:\sum_{i\in I}[F_i:E]=d\right\}.
 \label{eq:Xi-reg-d}
\end{equation}

Associated with $\xi$ is the finite abelian group
\begin{equation}
 C(\xi)=\prod_{i\in I}
 F_{\pm i}^{\times}/
 {N_{F_i/F_{\pm i}}(F_i^\times)}.
 \label{eq:Cxi-def}
\end{equation}
Let
\[
 I_\xi^*=\{i\in I:F_i\text{ is a field}\}.
\]
Over $\bbR$ the quotient in \eqref{eq:Cxi-def} is nontrivial exactly for
$i\in I_\xi^*$, and the sign map identifies
\begin{equation}
 C(\xi)\simeq\prod_{i\in I_\xi^*}\{\pm1\}.
 \label{eq:Cxi-signs}
\end{equation}

Choose $c=(c_i)_{i\in I}$ with $c_i\in F_{\pm i}^{\times}$ representing an
element of $C(\xi)$.  Set
\begin{equation}
 V_{\xi,c}=\bigoplus_{i\in I}F_i
 \label{eq:Vxic-space}
\end{equation}
and equip it with the $E/F$-Hermitian form
\begin{equation}
 h_{\xi,c}\bigl((w_i)_i,(w_i')_i\bigr)
 =\sum_{i\in I}\Tr_{F_i/E}
       \bigl(c_i\,\tau_i(w_i)w_i'\bigr).
 \label{eq:Vxic-form}
\end{equation}
Its isomorphism class depends only on $\xi$ and the class of $c$ in $C(\xi)$.
Moreover, multiplication by $y_i$ on the $i$-th summand defines an isometry
\begin{equation}
 x_{\xi,c}(w_i)_{i\in I}=(y_iw_i)_{i\in I}
 \in U(V_{\xi,c})(\bbR),
 \label{eq:x-xi-c}
\end{equation}
where \eqref{eq:yi-unitary} is exactly the condition that ensures
$x_{\xi,c}$ preserves $h_{\xi,c}$.

\subsection{The real signature condition}
\label{subsec:real-signature-condition}

The preceding construction becomes particularly concrete over
$\bbC/\bbR$.  There are only two possibilities for $F_{\pm i}$.

If $F_{\pm i}=\bbR$, then $F_i=\bbC$ and the corresponding summand in
\eqref{eq:Vxic-space} has complex dimension one.  Under
\eqref{eq:Cxi-signs}, the sign $c_i=+1$ gives a positive line and $c_i=-1$
gives a negative line.  If $F_{\pm i}=\bbC$, then
\[
 F_i\simeq\bbC\times\bbC,
\]
and this summand has Hermitian signature $(1,1)$; moreover, its contribution to
$C(\xi)$ is trivial.

Write
\begin{align*}
 I_{\bbR}^+&=\{i:F_{\pm i}=\bbR,\ c_i=+1\},\\
 I_{\bbR}^-&=\{i:F_{\pm i}=\bbR,\ c_i=-1\},\\
 I_{\bbC}&=\{i:F_{\pm i}=\bbC\}.
\end{align*}
The signature of $V_{\xi,c}$ is therefore
\begin{equation}
 \PI(V_{\xi,c})=|I_{\bbR}^+|+|I_{\bbC}|,
 \qquad
 \NI(V_{\xi,c})=|I_{\bbR}^-|+|I_{\bbC}|.
 \label{eq:signature-xi-c}
\end{equation}
In particular,
\begin{equation}
 \PI(V_{\xi,c})-\NI(V_{\xi,c})
   =\sum_{i\in I_\xi^*}c_i.
 \label{eq:signature-difference}
\end{equation}
This is the unitary analog of the signature calculation in
\cite[\S3.1]{CL22}; the quadratic-space dimensions occurring there are replaced
here by Hermitian dimensions.

Let $V$ be a nondegenerate Hermitian space of complex dimension $d$, and set
\begin{equation}
 \Delta_V=\PI(V)-\NI(V).
 \label{eq:Delta-V}
\end{equation}
For $\xi\in\Xi_{\reg,d}$, define
\begin{equation}
 C(\xi)_{\Delta_V}
 =\left\{c\in C(\xi):
       \sum_{i\in I_\xi^*}c_i=\Delta_V\right\}
 \label{eq:Cxi-Delta}
\end{equation}
and
\begin{equation}
 \Xi_{\reg,V}
 =\left\{(\xi,c):
       \xi\in\Xi_{\reg,d},\ c\in C(\xi)_{\Delta_V}\right\}.
 \label{eq:Xi-reg-V}
\end{equation}
Equation \eqref{eq:signature-difference} shows that
\[
 (V_{\xi,c},h_{\xi,c})\simeq V
 \quad\Longleftrightarrow\quad
 c\in C(\xi)_{\Delta_V}.
\]
Consequently, the regular semisimple conjugacy classes in $U(V)$ admit the
following description.

\begin{prop}[Regular semisimple parametrization]\label{prop:unitary-rss-param}
The assignment $(\xi,c)\mapsto[x_{\xi,c}]$ induces a bijection
\begin{equation}
 \Xi_{\reg,V}
 \xrightarrow{\ \sim\ }
 U(V)_{\reg}(\bbR)/\mathrm{conj}.
 \label{eq:unitary-rss-bijection}
\end{equation}
Under this bijection, the stable conjugacy class is determined by $\xi$, while
$c$ records the ordinary conjugacy class within that stable class.
\end{prop}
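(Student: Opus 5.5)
We follow the standard linear-algebra argument of \cite[\S2]{BP15}, specialized to $\bbC/\bbR$, and use \eqref{eq:signature-difference} to pin down which pairs $(\xi,c)$ produce $V$ itself.

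\textbf{Surjectivity.} Let $x\in U(V)_{\reg}(\bbR)$. Then $V$ is a module over $E[T]$, with $T$ acting by $x$. Regularity means $x$ is regular semisimple, so its characteristic polynomial $P$ is separable. Factor $P$ over $E$ into irreducibles. The involution $Q(T)\mapsto \overline{Q(0)}^{-1}T^{\deg Q}\,\overline{Q}(T^{-1})$ permutes these factors. Group the factors into orbits $i\in I$ of size one or two, and set $F_i=E[T]/(\text{product over the orbit})$. Let $\tau_i$ be the involution $T\mapsto T^{-1}$ composed with conjugation, let $F_{\pm i}=F_i^{\tau_i}$, and let $y_i$ be the image of $T$. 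Then $y_i\tau_i(y_i)=1$. Over $\bbR$, the field $F_{\pm i}$ is $\bbR$ when $F_i=\bbC$ (an eigenvalue on the unit circle) and $\bbC$ when $F_i=\bbC\times\bbC$ (a pair $\lambda,\bar\lambda^{-1}$ with $|\lambda|\ne1$). The space decomposes as $V=\bigoplus_i V_i$, where $V_i$ is a free $F_i$-module of rank one. The summands are mutually orthogonal because $h(xv,xw)=h(v,w)$ forces orthogonality of eigenspaces for non-paired eigenvalues. On each $V_i$, the form $h|_{V_i}$ is $\tau_i$-sesquilinear over $F_i$ relative to the trace. By nondegeneracy of the trace pairing, it has the form $\Tr_{F_i/E}(c_i\tau_i(w)w')$ for a unique $c_i\in F_{\pm i}^\times$ once a generator of $V_i$ is chosen. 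Changing the generator multiplies $c_i$ by a norm. This gives an isometry $V\simeq V_{\xi,c}$ carrying $x$ to $x_{\xi,c}$. Since $V_{\xi,c}\simeq V$, the signature computation \eqref{eq:signature-difference} forces $c\in C(\xi)_{\Delta_V}$. Regularity of $\xi$ follows from separability of $P$: distinct orbits cannot be exchanged by an isomorphism preserving the $y_i$.

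\textbf{Injectivity and well-definedness.} If $g\in U(V)$ conjugates $x_{\xi,c}$ to $x_{\xi',c'}$, then $g$ is an $E[T]$-module isometry, which identifies $\xi\simeq\xi'$. Because $\xi$ has no automorphisms, the identification $I\to I'$ is canonical, so $g$ maps $V_i$ to $V'_i$ $F_i$-linearly. Then $g|_{V_i}$ is multiplication by some $a_i\in F_i^\times$, and this gives $c_i'=c_i\,a_i\tau_i(a_i)$. Hence $c$ and $c'$ agree in $C(\xi)$. Conversely, the isomorphism class of $(V_{\xi,c},x_{\xi,c})$ depends only on the class of $c$, so the map is well-defined. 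Conversely again, for $c\in C(\xi)_{\Delta_V}$, the space $V_{\xi,c}$ has dimension $d$ and the same signature difference as $V$. By \eqref{eq:signature-xi-c} it therefore has the same signature, since $\PI+\NI=d$. Sylvester's law then gives $V_{\xi,c}\simeq V$, so $x_{\xi,c}$ defines a class in $U(V)$.

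\textbf{Stable classes.} Stable conjugacy means conjugacy over $\bbC$, where $U(V)\simeq GL_d$. Such a class is determined by the characteristic polynomial, which is determined by $\xi$ alone. Conversely, $\xi$ is recovered from the characteristic polynomial by the factorization above. So $\xi$ determines the stable class, and the fibre over $\xi$ is $C(\xi)_{\Delta_V}$.

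\textbf{Main obstacle.} The point requiring care is the local step identifying the restricted form on each $V_i$ with $\Tr_{F_i/E}(c_i\tau_i(w)w')$ and $c_i\in F_{\pm i}^\times$. This needs the hermitian property, which forces $\tau_i(c_i)=c_i$. Equally delicate is checking that in the split case $F_i=\bbC\times\bbC$ every $c_i$ is a norm, so that summand carries no sign. Both are short computations, and this split case is exactly where \eqref{eq:Cxi-signs} enters.
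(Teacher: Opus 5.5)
Your proposal is correct and takes essentially the same route as the paper: the paper's proof just cites the linear-algebra parametrization of \cite[\S2]{BP15}, specialized to $\bbC/\bbR$, and reads off the constraint $c\in C(\xi)_{\Delta_V}$ from \eqref{eq:signature-xi-c}, and you have written out those details. One small step you leave implicit is the converse, that a regular $\xi$ forces $x_{\xi,c}$ to have distinct eigenvalues (so the map lands in $U(V)_{\reg}$); this is an easy check, since a repeated eigenvalue produces either a swap of two isomorphic indices or the conjugation automorphism of some $F_{\pm i}=\bbC$ fixing $y_i$.
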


\begin{proof}
This is the specialization to $F=\bbR$, $E=\bbC$ of the regular semisimple
parametrization in \cite[\S2]{BP15}.  The only additional point is the explicit
signature constraint, which follows immediately from
\eqref{eq:signature-xi-c}.
\end{proof}

\subsection{Kottwitz signs and fixed-sign parameter spaces}
\label{subsec:kottwitz-param}

We next refine the parametrization above by keeping track of the Kottwitz sign,
following the corresponding organization in \cite[\S\S3.2,4.3]{CL22}.

\begin{defn}\label{def:kottwitz-sign}
Let $G$ be a connected reductive group over $\bbR$, let $G_{\mathrm{qs}}$ be its
quasi-split inner form, and choose maximal compact subgroups
$K\subset G(\bbR)$ and $K_{\mathrm{qs}}\subset G_{\mathrm{qs}}(\bbR)$.  The Kottwitz sign is
\begin{equation}\label{eq:kottwitz-sign}
 e(G)=(-1)^{\frac{\dim K_{\mathrm{qs}}-\dim K}{2}}.
\end{equation}
\end{defn}

For a Hermitian space of signature $(p,q)$, this gives the following explicit
formula.

\begin{lem}\label{lem:kottwitz-unitary-sign}
For $U(p,q)$ one has
\begin{equation}\label{eq:kottwitz-unitary}
 e\bigl(U(p,q)\bigr)=
 \begin{cases}
  1, & p+q\ \text{odd},\\[2mm]
  (-1)^{\frac{(p-q)^2}{4}}, & p+q\ \text{even}.
 \end{cases}
\end{equation}
\end{lem}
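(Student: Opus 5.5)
We compute directly from Definition~\ref{def:kottwitz-sign}. Put $n=p+q$. The quasi-split inner form of $U(p,q)$ is $U(\lceil n/2\rceil,\lfloor n/2\rfloor)$, and for $U(a,b)$ the maximal compact subgroup is $K=U(a)\times U(b)$, so
\[
 \dim K=a^2+b^2 .
\]
Hence
\[
 \dim K_{\mathrm{qs}}-\dim K
 =\Bigl\lceil \tfrac n2\Bigr\rceil^2+\Bigl\lfloor \tfrac n2\Bigr\rfloor^2-p^2-q^2 .
\]
We rewrite everything in terms of $d=p-q$ and $n$, using the identity $a^2+b^2=\tfrac12\bigl((a+b)^2+(a-b)^2\bigr)$. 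This gives
\[
 \dim K_{\mathrm{qs}}-\dim K=\tfrac12\bigl(d_{\mathrm{qs}}^2-d^2\bigr),
\]
where $d_{\mathrm{qs}}\in\{0,1\}$ is the difference for the quasi-split form. Note that $d\equiv n \pmod 2$.

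\emph{Case $n$ even.} Here $d_{\mathrm{qs}}=0$ and $d$ is even, so
\[
 \frac{\dim K_{\mathrm{qs}}-\dim K}{2}=-\frac{d^2}{4}.
\]
The sign is therefore $(-1)^{-d^2/4}=(-1)^{(p-q)^2/4}$. This exponent is an integer because $d$ is even.

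\emph{Case $n$ odd.} Here $d_{\mathrm{qs}}=1$ and $d$ is odd, so the exponent is $(1-d^2)/4$. Writing $d=2k+1$ gives $1-d^2=-4k(k+1)$, so the exponent equals $-k(k+1)$, which is even. Hence $e=1$.

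The only point needing care is the correct choice of the quasi-split inner form: it must be $U(\lceil n/2\rceil,\lfloor n/2\rfloor)$ and not a unitary group of a different signature difference. For $n$ even, the forms $U(p,q)$ and $U(q,p)$ are isomorphic, so all inner forms with $p+q=n$ share a single quasi-split form, and the half-integer exponents combine consistently. The remaining arithmetic is routine.
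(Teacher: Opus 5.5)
Your proof is correct. It is exactly the direct evaluation of the definition of $e(G)$, with $K=U(p)\times U(q)$ and $K_{\mathrm{qs}}=U(\lceil n/2\rceil)\times U(\lfloor n/2\rfloor)$, that the paper leaves implicit: it states the lemma without proof. The identity $\dim K_{\mathrm{qs}}-\dim K=\tfrac12(d_{\mathrm{qs}}^2-d^2)$ handles both parities cleanly, and your closing remark about half-integer exponents is unnecessary, since you have already shown the exponent is an integer in each case.
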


For the rest of this subsection, we keep only the notation needed below.  For
$\xi\in\Xi_{\reg}$ and $\varepsilon\in\{\pm1\}$, write
\begin{equation}\label{eq:Cxi-pm}
 C(\xi)^\varepsilon
 :=\left\{c=(c_i)\in C(\xi):
       \prod_{i\in I_\xi^*}c_i=\varepsilon\right\}.
\end{equation}

We record the following three parametrization statements. Their proofs are just simple computations, we omit them in this paper.

\begin{prop}\label{prop:fixed-sign-one-unitary}
Let $V$ be a nondegenerate Hermitian space of even dimension $d_V$, and fix
$e_0\in\{\pm1\}$.  Then the parametrization above induces a bijection
\begin{equation}\label{eq:fixed-sign-one-unitary}
 \bigsqcup_{\substack{\alpha\in H^1(\bbR,U(V))\\
              e(U(V_\alpha))=e_0}}
 \Xi_{\reg,V_\alpha}
 \xrightarrow{\ \sim\ }
 \left\{(\xi,c):
   \xi\in\Xi_{\reg,d_V},\ 
   c\in C(\xi)^{(-1)^{\frac{I_\xi^*}{2}}e_0}\right\}.
\end{equation}
\end{prop}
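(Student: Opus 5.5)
The plan is to identify the left-hand side of \eqref{eq:fixed-sign-one-unitary} with a subset of the set of all pairs $(\xi,c)$, and then to show that the Kottwitz-sign condition on $V_\alpha$ becomes a condition on $\prod_{i\in I_\xi^*}c_i$ alone.

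First I describe the pure inner forms. The set $H^1(\bbR,U(V))$ classifies nondegenerate Hermitian spaces of complex dimension $d_V$, so the $V_\alpha$ run exactly once over the signatures $(p,q)$ with $p+q=d_V$. By Proposition~\ref{prop:unitary-rss-param}, each $\Xi_{\reg,V_\alpha}$ is the set of pairs $(\xi,c)$ with $\xi\in\Xi_{\reg,d_V}$ and $c\in C(\xi)_{\Delta_{V_\alpha}}$.

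Next I show that the union over all $\alpha$ is disjoint and exhausts all pairs. Fix $\xi\in\Xi_{\reg,d_V}$ and $c\in C(\xi)$. The space $V_{\xi,c}$ has dimension $d_V$, and by \eqref{eq:signature-xi-c} it has a definite signature. Hence $c$ lies in $C(\xi)_{\Delta_{V_\alpha}}$ for exactly one $\alpha$, namely the one with $V_\alpha\simeq V_{\xi,c}$. Therefore
\[
 \bigsqcup_{\alpha}\Xi_{\reg,V_\alpha}
 =\{(\xi,c):\xi\in\Xi_{\reg,d_V},\ c\in C(\xi)\},
\]
and it remains to determine which pairs satisfy $e(U(V_{\xi,c}))=e_0$.

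Then comes the sign computation.

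\begin{itemize}
\item \emph{Parity of $|I_\xi^*|$.} We have $d_V=\sum_i[F_i:E]=|I_\xi^*|+2|I_{\bbC}|$. Since $d_V$ is even, $|I_\xi^*|$ is even, so the exponent $|I_\xi^*|/2$ makes sense.
\item \emph{Signature in terms of $c$.} Let $m=|I_{\bbR}^-|$ be the number of indices with $c_i=-1$. By \eqref{eq:signature-difference},
\[
 p-q=\sum_{i\in I_\xi^*}c_i=|I_\xi^*|-2m .
\]
\item \emph{Kottwitz sign.} Since $p+q=d_V$ is even, Lemma~\ref{lem:kottwitz-unitary-sign} gives
\[
 e\bigl(U(V_{\xi,c})\bigr)=(-1)^{(p-q)^2/4}=(-1)^{(|I_\xi^*|/2-m)^2}.
\]
\item \emph{Simplification.} An integer and its square have the same parity. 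Also $\prod_{i\in I_\xi^*}c_i=(-1)^m$. Hence
\[
 e\bigl(U(V_{\xi,c})\bigr)=(-1)^{|I_\xi^*|/2-m}=(-1)^{|I_\xi^*|/2}\prod_{i\in I_\xi^*}c_i .
\]
\end{itemize}

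It follows that $e(U(V_{\xi,c}))=e_0$ if and only if $c\in C(\xi)^{(-1)^{|I_\xi^*|/2}e_0}$, which is exactly the bijection \eqref{eq:fixed-sign-one-unitary}.

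The only step needing care is the first one. One must know that $H^1(\bbR,U(V))\to\{\text{Hermitian spaces of dimension } d_V\}$ is a bijection onto all signatures. Then each regular semisimple class of each inner form is counted exactly once, so the union on the left is genuinely disjoint and matches the right side. The sign computation itself is routine.
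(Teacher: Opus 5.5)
Your proof is correct and is essentially the ``simple computation'' the paper leaves out: $H^1(\bbR,U(V))$ runs once over all signatures $(p,q)$ with $p+q=d_V$, so by the definition \eqref{eq:Xi-reg-V} the union of the $\Xi_{\reg,V_\alpha}$ is the set of all pairs $(\xi,c)$ with $\xi\in\Xi_{\reg,d_V}$, and each pair lies in exactly one $\Xi_{\reg,V_\alpha}$. Then \eqref{eq:signature-difference}, Lemma~\ref{lem:kottwitz-unitary-sign} and $k^2\equiv k\pmod 2$ give $e(U(V_{\xi,c}))=(-1)^{|I_\xi^*|/2}\prod_{i\in I_\xi^*}c_i$, which is exactly the fixed-sign condition in the statement.
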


We now return to the relevant pair $(V,W)$.  Put
$d_V=\dim V$, $d_W=\dim W$, and
$\Delta_W=\PI(W)-\NI(W)$.  Let $\mathcal C(V,W)$ be the sets of $(\kappa'',c'')\in \Xi_{\reg}$ such that 
$$
(W_{\kappa'',c''},q_{\kappa'',c''})\hookrightarrow (W,q_W)\qquad
(W_{\kappa'',c''},q_{\kappa'',c''})\hookrightarrow (V,q_V)
$$
are the orthogonal subspaces, the orthogonal complement are quasi-split respectively, and $I_{\kappa''}=I_{\kappa''}^*$.

Thus, the parametrization above gives a bijection
\begin{equation}\label{eq:Gamma-C-bijection}
 \Gamma(G,H)^{\text{qs}}\xrightarrow{\ \sim\ }\mathcal C(V,W).
\end{equation}
There is a natural projection
\begin{equation}\label{eq:C-to-Xi-projection}
 \mathcal C(V,W)\longrightarrow\Xi(d_V,d_W),
 \qquad (\xi,c)\longmapsto\xi,
\end{equation}
where 

\begin{align}\label{1}
    \Xi(d_V,d_W)=\{\xi''\in \Xi_{\mathrm{reg}}~|~I_{\xi''}=I^*_{\xi''},|I^*_{\xi''}|\le min\{d_V,d_W\},\}
\end{align}

Then, we have the following two propositions to describe the fibers of the \eqref{eq:C-to-Xi-projection} and the fibers of the pure inner form version of \eqref{eq:C-to-Xi-projection}.

\begin{prop}\label{prop:fiber-CVW}
For $\xi''\in\Xi(d_V,d_W)$, the fiber of
$\mathcal C(V,W)\to\Xi(d_V,d_W)$ over $\xi''$ is
\begin{equation}\label{eq:fiber-CVW}
 \begin{cases}
 C(\xi'')_{
   \Delta_W-\frac{1+(-1)^{|I_{\xi''}^*|}}{2}},
      & d_W\ \text{odd},\\[3mm]
 C(\xi'')_{
   \Delta_W+\frac{1-(-1)^{|I_{\xi''}^*|}}{2}},
      & d_W\ \text{even}.
 \end{cases}
\end{equation}
\end{prop}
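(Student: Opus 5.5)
Unwinding the definition of $\mathcal C(V,W)$ reduces the statement to a signature count. Fix $\xi''\in\Xi(d_V,d_W)$, so that $I_{\xi''}=I_{\xi''}^*$. Every index then has $F_{\pm i}=\bbR$ and $F_i=\bbC$, and $W_{\xi'',c}$ is a sum of $k:=|I_{\xi''}^*|$ lines. By \eqref{eq:signature-xi-c}, its signature is $(|I_\bbR^+|,|I_\bbR^-|)$, with difference $\sum_i c_i$. A pair $(\xi'',c)$ lies in $\mathcal C(V,W)$ exactly when two conditions hold:
\begin{enumerate}
 \item $W_{\xi'',c}$ embeds isometrically in $W$ with quasi-split orthogonal complement;
 \item $W_{\xi'',c}$ embeds isometrically in $V$ with quasi-split orthogonal complement.
\end{enumerate}
So I will translate each condition into a linear constraint on $\sum_i c_i$ and check that they agree.

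The first step is to record which Hermitian spaces are quasi-split. A space of dimension $m$ is quasi-split iff its signature difference is $0$ when $m$ is even and $\pm1$ when $m$ is odd. By Sylvester's law of inertia, a space $X$ of signature $(p,q)$ embeds in a space of signature $(P,Q)$ iff $p\le P$ and $q\le Q$, and the complement then has signature $(P-p,Q-q)$. Hence condition (1) says that
\[
 \Delta_W-\sum_i c_i\in\{0\}\ \text{if } d_W-k \text{ is even},\qquad \Delta_W-\sum_i c_i\in\{\pm1\}\ \text{if } d_W-k \text{ is odd}.
\]
Condition (2) says the same with $(V,d_V)$ in place of $(W,d_W)$. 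The inequalities $p\le P$, $q\le Q$ are automatic once the complement has the stated signature difference and nonnegative dimension; for this I use $k\le\min\{d_V,d_W\}$ together with a quick check that the resulting signature entries are nonnegative.

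The second step couples the two conditions using $V=W\oplus^\perp Z$. Here $Z$ is quasi-split of odd dimension $2t+1$, and by \eqref{eq:relevant-basis} its signature difference is $\Delta_Z=(-1)^n$ with $n=d_W$. So $\Delta_V=\Delta_W+(-1)^{d_W}$ and $d_V\equiv d_W+1\pmod 2$. This yields four cases according to the parities of $d_W$ and $k$.
\begin{itemize}
 \item $d_W$ odd, $k$ even: (1) gives $\sum c_i=\Delta_W\pm1$, and (2) gives $\sum c_i=\Delta_V=\Delta_W-1$. Together they force $\sum c_i=\Delta_W-1$, matching $\Delta_W-\frac{1+1}{2}$.
 \item $d_W$ odd, $k$ odd: (1) forces $\sum c_i=\Delta_W$, and (2) requires $\Delta_W-1-\sum c_i=\pm1$, which is consistent. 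This matches $\Delta_W-0$.
 \item $d_W$ even, $k$ odd: (2) forces $\sum c_i=\Delta_W+1$, which satisfies (1). This matches $\Delta_W+1$.
 \item $d_W$ even, $k$ even: $\sum c_i=\Delta_W$, and (2) holds since $\Delta_V-\Delta_W=1$. This matches $\Delta_W+0$.
\end{itemize}
In every case the combined constraint is exactly $\sum c_i$ equal to the prescribed value. Since $c\in C(\xi'')\simeq\{\pm1\}^{I_{\xi''}^*}$ by \eqref{eq:Cxi-signs}, the fiber is precisely $C(\xi'')_{(\cdot)}$ as in \eqref{eq:fiber-CVW}.

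The main obstacle is the bookkeeping of signs. This includes confirming that $\Delta_Z=(-1)^{d_W}$ under the paper's normalization \eqref{eq:relevant-basis}: the form $(-1)^n\delta_{i,-j}$ makes $z_0$ have norm $(-1)^n$, and the pairs $z_{\pm i}$ contribute hyperbolic planes. It also includes confirming that "quasi-split" in the definition of $\mathcal C(V,W)$ refers to the complements in both $W$ and $V$, with no extra constraint coming from embedding compatibility. I will also check that the sets $C(\xi'')_{(\cdot)}$ may be empty, for instance when the parity of the target does not match $k$. This causes no trouble, because both sides of \eqref{eq:fiber-CVW} are then empty.
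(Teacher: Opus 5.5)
Your proposal is correct, and it is exactly the ``simple computation'' the paper omits. You combine the signature count \eqref{eq:signature-xi-c}, the criterion that a Hermitian space is quasi-split iff its signature difference is $0$ or $\pm1$ according to parity, and the identity $\Delta_V=\Delta_W+(-1)^{d_W}$ coming from the normalization \eqref{eq:relevant-basis}. Your four parity cases then reproduce \eqref{eq:fiber-CVW}, and you handle the nonnegativity of the complementary signatures and the empty fibers correctly. The compatibility worry you raise is harmless: the complement of $W''$ in $V$ is $W'\oplus^{\perp}Z$, and by Witt cancellation its isometry class depends only on signatures.
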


For a pure inner form indexed by
$\alpha\in H^1(\bbR,U(W))$, we use the same notation
$\mathcal C(V_\alpha,W_\alpha)$.  The preceding two propositions have the
following fixed-sign counterpart.

\begin{prop}\label{prop:fiber-fixed-kottwitz}
Fix $e_0\in\{\pm1\}$.  For $\xi''\in\Xi(d_V,d_W)$, the fiber over $\xi''$ of
\begin{equation}\label{eq:fixed-family-map}
 \bigsqcup_{\substack{\alpha\in H^1(\bbR,U(W))\\
 e(U(W_\alpha)\times U(V_\alpha))=e_0}}
 \mathcal C(V_\alpha,W_\alpha)
 \longrightarrow\Xi(d_V,d_W)
\end{equation}
is
\begin{equation}\label{eq:fixed-family-fiber}
 C(\xi'')^{\,(-1)^{\lfloor d_{\xi''}/2\rfloor}e_0}.
\end{equation}
\end{prop}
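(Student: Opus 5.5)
The plan is to deduce the statement from Proposition~\ref{prop:fiber-CVW}, applied to each pure inner form separately, and then to reorganize the disjoint union over $\alpha$ by the Kottwitz sign. Fix $\xi''\in\Xi(d_V,d_W)$ and put $k=|I_{\xi''}|=|I_{\xi''}^*|$. Since every $F_i''$ is a field, each $F_{\pm i}''=\bbR$, so $k=d_{\xi''}$. For a sign vector $c\in C(\xi'')$, write $m(c)=\sum_i c_i$ and let $j(c)$ be the number of $i$ with $c_i=-1$. Then
\[
m(c)=k-2j(c),\qquad \prod_i c_i=(-1)^{j(c)}=(-1)^{(k-m(c))/2}.
\]
So the product of the coordinates of $c$ is determined by $m(c)\bmod 4$. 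This reduces the claimed equality of sets to a congruence condition on $m(c)$.

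\emph{Step 1: describe the left-hand fiber in terms of $m(c)$.} By Proposition~\ref{prop:fiber-CVW}, the fiber of $\mathcal C(V_\alpha,W_\alpha)\to\Xi(d_V,d_W)$ over $\xi''$ is
\[
\{c:\ m(c)=\Delta_{W_\alpha}-\delta\},
\]
where $\delta=\delta(d_W,k)\in\{0,\pm1\}$ is the explicit shift appearing there. It depends only on $d_W$ and the parity of $k$, not on $\alpha$. The index $\alpha$ runs over all Hermitian spaces $W_\alpha$ of dimension $d_W$, so $\Delta_{W_\alpha}$ runs over every integer of parity $d_W$ in $[-d_W,d_W]$, each value exactly once.

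Consequently the fibers for distinct $\alpha$ are disjoint. Their union is the set of all $c$ with $m(c)+\delta$ admissible. Since $|m(c)|\le k\le d_W$, one checks that every $c$ whose value $m(c)$ has the correct parity (which is forced by $k$) does occur. This boundary check at $|m|=k$ uses $k\le\min\{d_V,d_W\}$ and should be verified separately in each case.

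\emph{Step 2: compute the Kottwitz sign as a function of $\Delta_{W_\alpha}$.} Since $d_V-d_W=2t+1$ is odd, exactly one of $U(V_\alpha)$, $U(W_\alpha)$ has even rank. By Lemma~\ref{lem:kottwitz-unitary-sign} the odd one contributes $1$, and the even one contributes $(-1)^{\Delta^2/4}$, where $\Delta$ is its signature difference. This sign is $+1$ if $\Delta\equiv0\pmod 4$ and $-1$ if $\Delta\equiv 2\pmod 4$.

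If $d_W$ is even, then $\Delta=\Delta_{W_\alpha}$. If $d_W$ is odd, then $\Delta=\Delta_{V_\alpha}=\Delta_{W_\alpha}+\Delta_Z$. Here $Z$ is quasi-split of odd dimension $2t+1$ with the form \eqref{eq:relevant-basis}, so $\Delta_Z=(-1)^n=-1$ because $n=d_W$ is odd. Thus the condition $e(U(W_\alpha)\times U(V_\alpha))=e_0$ becomes a condition on $\Delta_{W_\alpha}\bmod 4$. Via Step 1 it becomes a condition on $m(c)\bmod 4$, hence on $\prod_i c_i$.

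\emph{Step 3: case-by-case bookkeeping.} Do the four cases given by $d_W\bmod 2$ and $k\bmod 2$. In each case, express $(-1)^{(k-m)/2}$ through $e_0$ using $\Delta\equiv m+\delta(+\Delta_Z)\pmod 4$. The goal is to see that the resulting sign is $(-1)^{\lfloor k/2\rfloor}e_0$. For instance, take $d_W$ even and $k$ even. Then $\delta=0$ and $\Delta=m$, so $e_0=(-1)^{m^2/4}$. On the other hand $(-1)^{(k-m)/2}=(-1)^{k/2}(-1)^{m/2}$, and for $m$ even $(-1)^{m/2}=(-1)^{m^2/4}$, which gives the claim.

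The main obstacle is this sign bookkeeping in the odd cases. There one must use that for odd $\Delta'=m$ the quantity $(-1)^{(k-m)/2}$ is correctly compared with $(-1)^{(m\pm1)^2/4}$, with the sign of $\pm1$ coming from $\delta$ and $\Delta_Z$. I would first tabulate $m\bmod 4$ in each case to confirm the floor $\lfloor k/2\rfloor$ emerges; a mismatch would indicate a normalization error in $\Delta_Z$ or in $\delta$.
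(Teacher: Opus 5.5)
Your plan is correct, and it is the ``simple computation'' that the paper explicitly omits: combine Proposition~\ref{prop:fiber-CVW}, Lemma~\ref{lem:kottwitz-unitary-sign} and $\Delta_Z=(-1)^{d_W}$. The boundary check you flagged is immediate, since $k=d_W$ forces $\delta=0$, while $k<d_W$ gives $|m(c)+\delta|\le k+1\le d_W$.

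The three cases you left open also close with your normalizations. In all four cases the even-dimensional space among $W_\alpha,V_\alpha$ has signature difference $2\lfloor m(c)/2\rfloor$. Hence $e_0=(-1)^{\lfloor m(c)/2\rfloor}$, and since $k\equiv m(c)\pmod 2$ you get $\prod_i c_i=(-1)^{(k-m(c))/2}=(-1)^{\lfloor k/2\rfloor}e_0$, which is the claimed fiber.
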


\section{Geometric endoscopy reduction formula}

\subsection{Endoscopic character transfer}
\label{subsec:endoscopic-character-transfer}

Let $\phi$ and $\phi'$ be tempered parameters for $U(V)$ and $U(W)$.
Denote by $z_\phi\in A_\phi$ and $z_{\phi'}\in A_{\phi'}$ the images of
the scalar $-1$ in the corresponding component groups.  For
$s_V\in A_\phi$ and a pure inner form $U(V_\alpha)$, put
\begin{equation}\label{eq:virtual-packet-character-total}
 {\Sigma}^{s_V}_{V_\alpha,\phi}
 :=\sum_{\pi\in\Pi^{V_\alpha}_\phi}\eta_\pi(s_V)\,\pi,
 \qquad
 \Theta_{{\Sigma}^{s_V}_{V_\alpha,\phi}}
 :=\sum_{\pi\in\Pi^{V_\alpha}_\phi}\eta_\pi(s_V)\Theta_\pi,
\end{equation}
and define ${\Sigma}^{s_W}_{W_\beta,\phi'}$ and its distribution
character in the same way.

We use the following consequences of Shelstad's endoscopic character theory.
\begin{enumerate}
 \item The distribution characters
 $\Theta_{{\Sigma}_{V_+}}$,
 $\Theta_{{\Sigma}_{V_-}}$,
 $\Theta_{{\Sigma}_{W_+}}$, and
 $\Theta_{{\Sigma}_{W_-}}$
 are stable; see \cite[Lemma~5.2]{She79}.
 \item With the Whittaker-normalized transfer factors,
 
 \[ e\!\left(U(V_\alpha)\right)
   \Theta_{{\Sigma}^{s_V}_{V_\alpha,\phi}},\qquad
   e\!\left(U(W_\beta)\right)
   \Theta_{{\Sigma}^{s_W}_{W_\beta,\phi'}}
 \]
 are the endoscopic transfer of $\Theta_{{\Sigma}_{V_+}}\otimes
  \Theta_{{\Sigma}_{V_-}}$ and 
 $\Theta_{{\Sigma}_{W_+}}\times
  \Theta_{{\Sigma}_{W_-}}$ respectively.
 We use these identities in the form provided by Shelstad's real endoscopic
 transfer theory; see \cite{She81,She82,She08a,She08c,She12}.
\end{enumerate}

\subsection{Stable geometric multiplicities and endoscopic reduction}
\label{subsec:stable-geometric-endoscopy}

We finally introduce the stable geometric term that occurs in the endoscopic
reduction.  We keep the notation of \cite[\S6]{BP15}.  Let $E/F=\bbC/\bbR$ 
and $d,d'\geq0$ be the dimensions of the Hermitian spaces, respectively, and let $\Xi(d,d')$ denote the elliptic parameter space as \eqref{1}, which adds the condition that $|I^*_{\xi''}|$ has the same parity as $d$ or $d'$.  For
$\xi\in\Xi(d,d')$, write $d_\xi$ for its degree and $P_\xi$ for the
associated characteristic polynomial.  Fix $\delta\in E^\times$ with
$\operatorname{Tr}_{E/F}(\delta)=0$. In this paper, we set $\delta=i$.

\begin{defn}\label{def:Smu}
Let $\mu:E^\times\to\bbC^\times$ be a continuous character satisfying
\begin{equation}\label{eq:mu-restriction-Smu}
 \mu\big|_{F^\times}=\operatorname{sgn}_{E/F}^{\,d+d'+1}.
\end{equation}
For stable quasi-characters ${\Sigma}$ and ${\Sigma}'$ of the
corresponding unitary groups, define
\begin{align}
 S_\mu({\Sigma},{\Sigma}')
 :=\lim_{s\to0^+}\int_{\Xi(d,d')}
 &\mu\!\left(\delta^{-d_\xi}P_\xi(1)\right)
 |C(\xi)|\,
 c_{{\Sigma}}(\xi)c_{{\Sigma}'}(\xi)\notag\\
 &\times D^d(\xi)^{1/2}D^{d'}(\xi)^{1/2}
 \Delta(\xi)^{-1/2+s}\,d\xi.
 \label{eq:Smu-definition}
\end{align}
\end{defn}

We now state the geometric endoscopic reduction formula in the form needed
later. This is an archimedean analogy of \cite[Proposition~6.2.1]{BP15}.

\begin{thm}[Geometric endoscopic reduction formula]
\label{thm:geometric-endoscopy-reduction}
Let $(V,W)$ be a relevant pair of nondegenerate Hermitian spaces over
$E/F=\bbC/\bbR$.  Suppose that the endoscopic data give decompositions
\[
 V=V_+\oplus^\perp V_-,
 \qquad
 W=W_+\oplus^\perp W_-.
\]
Put
\[
 d_\pm=\dim V_\pm,
 \qquad
 d'_\pm=\dim W_\pm.
\]
Choose the endoscopic characters $\mu_+,\mu_-,\mu'_+,\mu'_-$ so that
\begin{equation}\label{eq:endoscopic-mu-restrictions-geom}
 \mu_+|_{F^\times}=\operatorname{sgn}_{E/F}^{d_-},\qquad
 \mu_-|_{F^\times}=\operatorname{sgn}_{E/F}^{d_+},\qquad
 \mu'_+|_{F^\times}=\operatorname{sgn}_{E/F}^{d'_-},\qquad
 \mu'_-|_{F^\times}=\operatorname{sgn}_{E/F}^{d'_+}.
\end{equation}
Let
${\Sigma}_{V_\pm}$ and ${\Sigma}_{W_\pm}$ be the stable
quasi-characters on the four endoscopic factors, and let
${\Sigma}^{s_V}_{V_\alpha}$ and
${\Sigma}^{s_W}_{W_\alpha}$ denote their endoscopic transfers to the
pure inner form indexed by $\alpha$.  Then, for every $e_0\in\{\pm1\}$,
\begin{align}
 &\sum_{\substack{\alpha\in H^1(\bbR,U(W))\\ e(G_\alpha)=e_0}}
 m\!\left({\Sigma}^{s_W}_{W_\alpha},
          {\Sigma}^{s_V}_{V_\alpha}\right)
 =\frac12 \gamma^V_{\mu_+,\mu_-}\gamma^W_{\mu_+',\mu_-'}\Bigl(
 e_0\,
 S_{\mu_+\mu'_+}({\Sigma}_{V_+},{\Sigma}_{W_+})
 S_{\mu_-\mu'_-}({\Sigma}_{V_-},{\Sigma}_{W_-})
 \notag\\
 &\hspace{31mm}+
 S_{\mu_+\mu'_-}({\Sigma}_{V_+},{\Sigma}_{W_-})
 S_{\mu_-\mu'_+}({\Sigma}_{V_-},{\Sigma}_{W_+})
 \Bigr).
 \label{eq:geometric-endoscopy-reduction}
\end{align}
where
$$
\gamma^V_{\mu_+,\mu_-}=-i(\text{$d_+$ and $d_-$ are all odd}),1(\text{otherwise})\qquad \gamma^W_{\mu_+',\mu_-'}=-i(\text{$d_+'$ and $d_-'$ are all odd}),1(\text{otherwise})
$$
\end{thm}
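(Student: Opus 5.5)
The plan is to start from Theorem~\ref{thm:BP-geometric-multiplicity}: each spectral multiplicity on the left of \eqref{eq:geometric-endoscopy-reduction} is replaced by $m_{\geom}$, so that the left side becomes
\[
\sum_{e(G_\alpha)=e_0}\lim_{s\to0^+}\int_{\Gamma(G_\alpha,H_\alpha)}D^{G_\alpha}(x)^{1/2}c_{\Theta}(x)\Delta(x)^{s-1/2}\,dx ,
\]
where $\Theta=\Theta_{\Sigma^{s_V}_{V_\alpha}}\otimes\Theta_{\Sigma^{s_W}_{W_\alpha}}$. Since $c_\Theta(x)=0$ off the quasi-split locus, each integral lives on $\Gamma^{\mathrm{qs}}(G_\alpha,H_\alpha)$. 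I would push the integral forward along \eqref{eq:Gamma-C-bijection} and the projection \eqref{eq:C-to-Xi-projection}. Proposition~\ref{prop:fiber-fixed-kottwitz} then rewrites the sum over $\alpha$ as a single integral over $\xi''\in\Xi(d_V,d_W)$, with an inner sum over $c\in C(\xi'')^{(-1)^{\lfloor d_{\xi''}/2\rfloor}e_0}$. The germ $c_\Theta(x)$ factors as $c_{\Sigma^{s_V}}(x)\,c_{\Sigma^{s_W}}(x)$. At $x=x_{\xi'',c}$ the centralizer splits as in \eqref{eq:centralizer-G}, and the regular germ reduces to the value of the character near $x$ on the anisotropic torus part times the regular germ of the quasi-split factor $U(V'_x)\times U(W'_x)$.

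The second step applies the endoscopic character identities of \S\ref{subsec:endoscopic-character-transfer}(2). These express $e(U(V_\alpha))c_{\Sigma^{s_V}_{V_\alpha}}(x_{\xi'',c})$ as a sum over the ways of splitting $\xi''=\xi_+\sqcup\xi_-$ compatible with $(d_+,d_-)$. Each term is a transfer factor times $c_{\Sigma_{V_+}}(\xi_+)c_{\Sigma_{V_-}}(\xi_-)$, and the same holds on the $W$ side. The Whittaker-normalized real transfer factors for unitary groups are explicit. On the elliptic part they equal
\[
\mu_+\bigl(\delta^{-d_{\xi_-}}P_{\xi_-}(y)\bigr)\,\mu_-\bigl(\delta^{-d_{\xi_+}}P_{\xi_+}(y)\bigr)
\]
times a product of signs $\prod_{i\in I_-}c_i$ (up to a constant). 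I expect the constants $\gamma^V_{\mu_+,\mu_-}$ and $\gamma^W_{\mu'_+,\mu'_-}$ to come from normalizing $\delta^{-d}$ when both blocks have odd dimension. The factor $e(G_\alpha)$ is constant and equal to $e_0$ on the fixed-sign family, so it can be pulled out. Summing over $c$ in the coset $C(\xi'')^{\pm}$ is then a character sum on $\prod\{\pm1\}$. Writing $\mathbf 1_{C^{\varepsilon}}=\tfrac12(1+\varepsilon\prod_i c_i)$, the sum over $c$ of $\prod_{i\in I_-}c_i\prod_{i\in I'_-}c_i$ survives only in two cases. In the first, $I_-=I'_-$, so the $V$- and $W$-splittings agree; this is the ``diagonal'' term and produces the $S_{\mu_+\mu'_+}S_{\mu_-\mu'_-}$ summand, weighted by the sign $e_0$ together with $(-1)^{\lfloor d_{\xi''}/2\rfloor}$. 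In the second, $I_-$ is the complement of $I'_-$; this is the ``crossed'' term and produces the $S_{\mu_+\mu'_-}S_{\mu_-\mu'_+}$ summand. The factor $|C(\xi'')|$ in \eqref{eq:Smu-definition} comes from counting, and the $\tfrac12$ comes from the coset indicator.

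The third step is bookkeeping. I would factor $D^{G}(x)^{1/2}$ and $\Delta(x)^{s-1/2}$ along $\xi''=\xi_+\sqcup\xi_-$, using $\Delta(x)=|N(\det(1-x))|$, which is multiplicative, and similarly for the Weyl discriminants restricted to the anisotropic tori. Cross terms such as $|\mathrm{Res}(P_{\xi_+},P_{\xi_-})|$ should cancel between $D^G$ and the endoscopic discriminants. I would also check that the characters $\mu_\pm\mu'_\pm$ evaluated on $P_{\xi}(1)$ satisfy \eqref{eq:mu-restriction-Smu}, using \eqref{eq:endoscopic-mu-restrictions-geom}. The integral over pairs $(\xi_+,\xi_-)$ then splits into the product of two integrals over $\Xi(d_\pm,d'_\pm)$ or $\Xi(d_\pm,d'_\mp)$. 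Interchanging the limit $s\to0^+$ with this factorization is justified by absolute convergence for $\Re s>0$, as in Definition~\ref{def:mgeom}.

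The main obstacle is the sign bookkeeping in the second step. The signs to be tracked are the precise real transfer factors (including the $\kappa$-signs $\prod c_i$ and the $\delta$-normalizations), the parity shift $(-1)^{\lfloor d_{\xi''}/2\rfloor}$ from Proposition~\ref{prop:fiber-fixed-kottwitz}, and the Kottwitz sign. Together they must produce exactly $e_0$ on the diagonal term and $1$ on the crossed term, with the global constant $\gamma^V\gamma^W$. I would first verify this in the smallest case $d_V=2,d_W=1$ with $\xi''$ of degree one, and then argue multiplicatively in the blocks.
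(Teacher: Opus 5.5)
Your proposal follows essentially the same route as the paper's sketch. Both sides are written as integrals over $\Xi(d_V,d_W)$ using the fixed-sign fibers $C(\xi)^{(-1)^{\lfloor d_\xi/2\rfloor}e_0}$ of Proposition~\ref{prop:fiber-fixed-kottwitz}; the integrands are compared pointwise through Shelstad's identities and the explicit transfer factors; and the sum over the fiber isolates the diagonal and crossed splittings, with $\gamma^V\gamma^W$ coming from the transfer-factor normalization.

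Two points to fix when you carry it out. First, the character identities hold only at regular elements, so they reach the germs at the non-regular points $x_{\xi,c}$ only through the limit formula $D^G(x)^{1/2}c_\theta(x)=|W(G_x,T_{qd,x})|^{-1}\lim_{x'\to x}D^G(x')^{1/2}\theta(x')$, $x'\in T_{qd,x}(\bbR)$, of \cite[Prop.~4.5.1]{BP20}, which the paper invokes explicitly. Second, with your indicator $\tfrac12(1+\varepsilon\prod_i c_i)$, the factor $\varepsilon=(-1)^{\lfloor d_\xi/2\rfloor}e_0$ lands on the crossed term, not the diagonal one; after pulling out $e(G_\alpha)=e_0$, it is a residual $(-1)^{\lfloor d_\xi/2\rfloor}$ on the crossed term that the transfer-factor constants must cancel.
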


\begin{proof}
    I will give a sketch proof there, more details could refer to \cite{BP15}.

First, the left hand of the equation could be written as
$$
\lim_{s\to 0}\int_{\Xi(d_V,d_w)}f_{1,e_0}(\xi)\Delta(\xi)^{s-\frac{1}{2}}d\xi
$$
where
$$
f_{1,e_0}(\xi)=D^{d_V}(\xi)^{\frac{1}{2}} D^{d_W}(\xi)^{\frac{1}{2}}\sum_{c\in C(\xi)^{(-1)^{[\frac{d_\xi}{2}]}e_0}}c_{\Sigma^{s_V}_{V_\alpha(\xi,c)}}(x(\xi,c))c_{\Sigma^{s_W}_{W_\alpha(\xi,c)}}(x(\xi,c))
$$

And the right hand of the equation is just
$$
\lim_{s\to 0}\int\limits_{\Xi(d_V,d_W)}f_{2,e_0}(\xi)\Delta(\xi)^{s-\frac{1}{2}}d\xi
$$
where 

\begin{align}
\begin{aligned}
f_{2,e_0}(\xi)
={}&\frac{\gamma^V_{\mu_+,\mu_-}
          \gamma^W_{\mu'_+,\mu'_-}}{2}
|C(\xi)|\\
&\times\Biggl(e_0
\sum_{(\xi_1,\xi_2)=\xi}
(\mu_+\mu'_+)\bigl(\delta^{-d_{\xi_1}}P_{\xi_1}(1)\bigr)
c_{\Sigma_{V_+}}(\xi_1)c_{\Sigma_{W_+}}(\xi_1)
D^{d_+}(\xi_1)^{\frac{1}{2}}D^{d'_+}(\xi_1)^{\frac{1}{2}}\\
&\qquad\qquad\times
(\mu_-\mu'_-)\bigl(\delta^{-d_{\xi_2}}P_{\xi_2}(1)\bigr)
c_{\Sigma_{V_-}}(\xi_2)c_{\Sigma_{W_-}}(\xi_2)
D^{d_-}(\xi_2)^{\frac{1}{2}}D^{d'_-}(\xi_2)^{\frac{1}{2}}\\
&\quad+
\sum_{(\xi_1,\xi_2)=\xi}
(\mu_+\mu'_-)\bigl(\delta^{-d_{\xi_1}}P_{\xi_1}(1)\bigr)
c_{\Sigma_{V_+}}(\xi_1)c_{\Sigma_{W_-}}(\xi_1)
D^{d_+}(\xi_1)^{\frac{1}{2}}D^{d'_-}(\xi_1)^{\frac{1}{2}}\\
&\qquad\qquad\times
(\mu_-\mu'_+)\bigl(\delta^{-d_{\xi_2}}P_{\xi_2}(1)\bigr)
c_{\Sigma_{V_-}}(\xi_2)c_{\Sigma_{W_+}}(\xi_2)
D^{d_-}(\xi_2)^{\frac{1}{2}}D^{d'_+}(\xi_2)^{\frac{1}{2}}
\Biggr).
\end{aligned}
\end{align}

Now, we aim to prove that $f_{1,e_0}(\xi)=f_{2,e_0}(\xi)$, we utilize the perturbation formula to convert function $c_{\theta}$ to the ordinary character \cite[Proposition 4.5.1]{BP20}. And The endoscopic character relation formula in \eqref{subsec:endoscopic-character-transfer} is the basic endoscopic input.
$$
D^G(x)^{\frac{1}{2}}c_{\theta}(x)=|W(G_x,T_{qd,x})|^{-1}\lim_{x'\in T_{qd,x}(F)\to x}D^G(x')^{\frac{1}{2}}\theta(x')
$$
$$
e(G_\alpha)\,D^{G_\alpha}(x)^{\frac{1}{2}}
\Theta_{\Sigma_\alpha^{s}}(x)
=
\sum_{\substack{
y\in H(\mathbb R)_{\mathrm{reg}}/\mathrm{stconj}\\
y\leftrightarrow x}}
\Delta_{\mathrm{Wh}}(y,x)\,
D^H(y)^{\frac{1}{2}}\Theta_{\Sigma_H}(y)
$$

At last, the computation about the multiplication of the transfer factor and the linear combination about the fibers could produce the last theorem \cite{BP15}, in particular the normalization factor in \cite{BP15} is just $\gamma^V_{\mu_+,\mu_-}$ there \cite{KS12}.
\end{proof}

Adding the identities of Theorem~\ref{thm:geometric-endoscopy-reduction} for
$e_0=1$ and $e_0=-1$ gives
\begin{equation}\label{eq:endoscopic-reduction-summed}
 \sum_{\alpha\in H^1(\bbR,U(W))}
 m\!\left({\Sigma}^{s_W}_{W_\alpha,\phi'},
          {\Sigma}^{s_V}_{V_\alpha,\phi}\right)
 =\gamma^V_{\mu_+,\mu_-}\gamma^W_{\mu_+',\mu_-'}
 S_{\mu_+\mu'_-}({\Sigma}_{V_+},{\Sigma}_{W_-})
 S_{\mu_-\mu'_+}({\Sigma}_{V_-},{\Sigma}_{W_+}).
\end{equation}
If one of the two pairs occurring in this product is $(0,0)$, we use
\begin{equation}\label{eq:S00}
 S_\nu(0,0)=1.
\end{equation}

We now recover the stable geometric term by the special component-group
elements used above.  Let $\mu$ satisfy
\eqref{eq:mu-restriction-Smu}.  Take
\[
  s_V=z_\phi,\qquad s_W=1,
\]
and choose the endoscopic characters so that $\mu_-\mu'_+=\mu$.  For these
choices,
\[
  V_+=0,\qquad V_-=V,\qquad W_+=W,\qquad W_-=0.
\]
Therefore \eqref{eq:endoscopic-reduction-summed} and
\eqref{eq:S00} give
\begin{equation}\label{eq:Smu-special-component-elements}
 S_\mu({\Sigma}_V,{\Sigma}_W)
 =\sum_{\alpha\in H^1(\bbR,U(W))}
 m\!\left(
   {\Sigma}^{\,1}_{W_\alpha,\phi'},
   {\Sigma}^{\,z_\phi}_{V_\alpha,\phi}
 \right).
\end{equation}
Equivalently, one may take $s_V=1$ and $s_W=z_{\phi'}$ and choose
$\mu_+\mu'_- = \mu$.

\section{Basic reduction method and the reduction to the basic cases}\label{sec:reduction-basic-cases}

In this section, we reduce Conjecture~2 to the basic cases.  The argument has
four steps: parabolic reduction to good parity, reduction of the multiplicities
of good parity constituents, reduction by removing two distinct constituents from different endoscopic
blocks, and a reduction by removing two distinct constituents from one endoscopic
block.  The organization follows the reduction strategy of \cite[\S5]{CL22}.
For unitary endoscopy, we use the conventions of \cite{Mok15,BP15}.

\medskip

\subsection{The first reduction}

We first consider $(V,\phi)$.  Let
\[
  M_V:=\operatorname{std}\circ\phi\big|_{W_{\mathbb C}}
\]
be the representation of $W_{\mathbb C}=\mathbb C^\times$ associated with
$\phi$.  We decompose it as
\begin{equation}\label{eq:good-bad-parity-decomp}
  M_V=M_{V,\mathrm{gp}}
      \oplus\bigl(M_{V,\mathrm{bp}}
      \oplus M_{V,\mathrm{bp}}^{c,\vee}\bigr),
\end{equation}
where the first term is the good-parity part and the second term is the sum of
all paired bad-parity constituents.  Choose a corresponding orthogonal
decomposition
\begin{equation}\label{eq:hermitian-parabolic-decomp}
  V=V_{\mathrm{gp}}
    \oplus^{\perp}\bigl(X_{\mathrm{bp}}\oplus X_{\mathrm{bp}}^{\vee}\bigr),
\end{equation}
where $X_{\mathrm{bp}}$ is totally isotropic.  If $P_X$ is the parabolic
subgroup stabilizing $X_{\mathrm{bp}}$, then its Levi factor is
\[
  M_X\simeq
  \operatorname{Res}_{\mathbb C/\mathbb R}\mathrm{GL}(X_{\mathrm{bp}})
  \times U(V_{\mathrm{gp}}).
\]
Accordingly, $\phi$ factors through ${}^LM_X$, and we denote the two factors by
$\phi_{\mathrm{bp}}^{\mathrm{GL}}$ and $\phi_{\mathrm{gp}}$.

The paired summand in \eqref{eq:good-bad-parity-decomp} contributes no new
generator to the component group. Hence, there is a natural identification
\begin{equation}\label{eq:component-group-parabolic}
  A_{\phi_{\mathrm{gp}}}\xrightarrow{\sim}A_\phi.
\end{equation}
Let $\tau_{\mathrm{bp}}$ be the tempered representation of
$\mathrm{GL}(X_{\mathrm{bp}})(\mathbb C)$ attached to
$\phi_{\mathrm{bp}}^{\mathrm{GL}}$.  Under the local Langlands correspondence,
normalized parabolic induction gives
\begin{equation}\label{eq:packet-parabolic-bijection}
  \pi_{\mathrm{gp}}\longmapsto
  \operatorname{Ind}_{P_X}^{U(V)}
  \bigl(\tau_{\mathrm{bp}}\boxtimes\pi_{\mathrm{gp}}\bigr).
\end{equation}
The induced representation is irreducible and tempered, and the character of
the component group is unchanged under
\eqref{eq:component-group-parabolic}; for the parabolic-induction description
of real unitary packets, see \cite[Theorem~5.3]{MR19}.
The compatibility of the Gross--Prasad multiplicity with the corresponding
parabolic induction is the tempered intertwining result
\cite[Proposition~7.4.1]{BP20}.

The root-number character is also unchanged.  Indeed, by multiplicativity,
each paired constituent $\xi\oplus\xi^{c,-1}$ contributes trivially to the
root numbers occurring in ;
this is the identity used in \cite[(5.1)]{GGP12}.  Therefore, Conjecture 2 for
$(V,\phi)$ is reduced to the good-parity parameter
$(V_{\mathrm{gp}},\phi_{\mathrm{gp}})$.

The reduction for $(W,\phi')$ is the same.  Consequently, from now on we may
 assume that both $\phi$ and $\phi'$ have good parity.

\subsection{Endoscopic data used in the reductions}\label{subsec:endoscopic-data-reductions}

We record the endoscopic data used in the four reductions.  For $s_V\in A_\phi$, choose a lift acting by $+1$ and $-1$ on
\[
  M_V=M_{V,+}\oplus M_{V,-}.
\]
Equivalently, in the notation used in the calculation below, one may write
$s_V=c(I_{M_{V,+}},-I_{M_{V,-}})$.  Put
$n_\pm=\dim M_{V,\pm}$, so $n=n_++n_-$.  The corresponding elliptic
endoscopic group is
\[
  U(V_+)\times U(V_-),\qquad \dim V_\pm=n_\pm.
\]
The same construction for $s_W\in A_{\phi'}$ gives
$U(W_+)\times U(W_-)$.

We now spell out the $L$-embedding on the $V$-side.  Following
\cite[(2.1.12)--(2.1.13)]{Mok15}, set
\[
  \kappa_+=(-1)^{n_-},\qquad \kappa_-=(-1)^{n_+},
\]
and choose conjugate self-dual characters $\mu_+$ and $\mu_-$ of
$W_{\mathbb C}=\mathbb C^\times$ of signs $\kappa_+$ and $\kappa_-$,
respectively.  Thus, under local class field theory,
$\mu_\pm|_{\mathbb R^\times}$ is trivial when $\kappa_\pm=1$ and is
$\omega_{\mathbb C/\mathbb R}$ when $\kappa_\pm=-1$. For $d\geq1$, let
\[
  (\Phi_d)_{a,b}=(-1)^{a-1}\delta_{a,d+1-b}
  \qquad (1\leq a,b\leq d),
\]
and fix $j\in W_{\mathbb R}\setminus W_{\mathbb C}$.  Then the endoscopic
$L$-embedding
\[
  \zeta_{\mu_+,\mu_-}:{}^L\!\bigl(U(V_+)\times U(V_-)\bigr)
  \longrightarrow {}^LU(V)
\]
is represented by
\begin{align}
 (g_+,g_-)\rtimes1
 &\longmapsto
   \operatorname{diag}(g_+,g_-)\rtimes1,
                                                        \label{eq:endoscopic-embedding-neutral}\\
 (I_{n_+},I_{n_-})\rtimes z
 &\longmapsto
   \operatorname{diag}\bigl(\mu_+(z)I_{n_+},\mu_-(z)I_{n_-}\bigr)\rtimes z,
   \qquad z\in W_{\mathbb C},                         \label{eq:endoscopic-embedding-WC}\\
 (I_{n_+},I_{n_-})\rtimes j
 &\longmapsto
   \operatorname{diag}\bigl(\kappa_+\Phi_{n_+},
                             \kappa_-\Phi_{n_-}\bigr)
   \Phi_n^{-1}\rtimes j.                              \label{eq:endoscopic-embedding-j}
\end{align}
Its $\widehat{U(V)}$-conjugacy class is independent of the choice of $j$.
This is exactly the explicit realization of the endoscopic transfer that we
use below.

At the level of parameters the construction is summarized by the commutative
diagram
\begin{equation}\label{diag:endoscopic-parameter-V}
\begin{tikzcd}[column sep=large,row sep=large]
W_{\mathbb R}
  \arrow{r}{\phi}
  \arrow[swap]{dr}{(\phi_+,\phi_-)}
& {}^LU(V)
  \arrow{r}{\mathrm{std}}
& {}^L\!\operatorname{Res}_{\mathbb C/\mathbb R}\mathrm{GL}(V)\\
& {}^L\!\bigl(U(V_+)\times U(V_-)\bigr)
  \arrow[swap]{u}{\zeta_{\mu_+,\mu_-}}
&
\end{tikzcd}
\end{equation}
so that, after restriction to $W_{\mathbb C}$ and passage to the standard
representation,
\begin{equation}\label{eq:unitary-endoscopic-parameter-decomp}
  \phi=\mu_+\phi_+\oplus\mu_-\phi_-.
\end{equation}
There is an identical diagram on the $W$-side, with characters
$\mu'_+,\mu'_-$ and
\[
  \phi'=\mu'_+\phi'_+\oplus\mu'_-\phi'_-.
\]
The compatibility of \eqref{diag:endoscopic-parameter-V} with standard base
change is the commutative diagram of \cite[(2.1.14)]{Mok15}.

The virtual packets and their endoscopic character transfer were fixed in
Subsection~\ref{subsec:endoscopic-character-transfer}.  In the arguments below
we apply the geometric endoscopic reduction formula
Theorem~\ref{thm:geometric-endoscopy-reduction}, principally through its summed
form \eqref{eq:endoscopic-reduction-summed}.

\subsection{The reduction method from the endoscopic reduction formula}

\subsubsection{The second reduction}
\label{subsec:second-reduction}

After the parabolic reduction, no bad-parity summand will occur in the
remaining argument.  Thus, from now on, we write
\begin{equation}\label{eq:good-parity-two-parameters}
  \phi=\bigoplus_{i=1}^{M}m_i\chi_i,
  \qquad
  \phi'=\bigoplus_{j=1}^{N}n_j\chi'_j,
\end{equation}
where the $\chi_i$ (respectively $\chi'_j$) are pairwise distinct conjugate
self-dual characters of the sign required by $U(V)$ (respectively $U(W)$).
In particular,
\[
  A_\phi=\bigoplus_{i=1}^{M}(\mathbb Z/2\mathbb Z)a_i,
  \qquad
  A_{\phi'}=\bigoplus_{j=1}^{N}(\mathbb Z/2\mathbb Z)b_j
\]
in the notation of \cite[\S2.1]{Xue23}.

\begin{thm}[Second reduction]\label{thm:second-reduction}
Assume that Conjecture~2 is known for all relevant Gross--Prasad pairs of
strictly smaller rank.  Then, for the good-parity parameters
\eqref{eq:good-parity-two-parameters}, it is enough to prove Conjecture~2 under
the additional assumption
\[
  m_i\leq 2\quad(1\leq i\leq M),
  \qquad
  n_j\leq 2\quad(1\leq j\leq N).
\]
More precisely, if $m_i\geq3$ (or $n_j\geq3$), the conjecture is reduced to the
parameter obtained by lowering this multiplicity by two, together with
Gross--Prasad pairs of smaller rank.
\end{thm}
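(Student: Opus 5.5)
Suppose $m_1\geq3$; the case $n_j\geq 3$ is symmetric. Set $\phi_0=\phi\ominus 2\chi_1$, a good-parity parameter for a Hermitian space $V_0$ with $\dim V_0=\dim V-2$. The plan is to compare the multiplicity functions on $A_\phi\times A_{\phi'}$ and on $A_{\phi_0}\times A_{\phi'}$ through the summed endoscopic reduction formula \eqref{eq:endoscopic-reduction-summed}. Since $m_1-2\geq1$, the component groups are canonically identified, $A_{\phi_0}\simeq A_\phi$ via $a_i\mapsto a_i$. The root-number characters also agree, because for every $\mu_j$
\[
\varepsilon(\phi\otimes\mu_j)=\varepsilon(\phi_0\otimes\mu_j)\,\varepsilon(\chi_1\otimes\mu_j)^2
=\varepsilon(\phi_0\otimes\mu_j).
\]
Here $\varepsilon(\chi_1\otimes\mu_j)=\pm1$, since $\chi_1\mu_j$ is conjugate symplectic of the right sign (the standard GGP parity).

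By multiplicity one and the uniqueness of the distinguished member, it suffices to prove an equality of functions of $(s_V,s_W)$:
\[
F_\phi(s_V,s_W):=\sum_\alpha m\bigl(\Sigma^{s_V}_{V_\alpha,\phi},\Sigma^{s_W}_{W_\alpha,\phi'}\bigr)
=F_{\phi_0}(s_V,s_W).
\]
Indeed $F_\phi=\sum_{(\eta,\eta')}m(\pi_\eta,\sigma_{\eta'})\,\eta(s_V)\eta'(s_W)$, so Fourier inversion on the finite abelian group $A_\phi\times A_{\phi'}$ shows that $F_\phi=F_{\phi_0}$ forces the distinguished characters to coincide. Combined with the epsilon identity above, this reduces the conjecture for $\phi$ to the conjecture for $\phi_0$.

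To compare $F_\phi$ and $F_{\phi_0}$, fix $(s_V,s_W)$ and apply \eqref{eq:endoscopic-reduction-summed} to both. The element $s_V$ acts on all copies of $\chi_1$ with the same sign, so $2\chi_1$ lies entirely in $\mu_+\phi_+$ or entirely in $\mu_-\phi_-$; say $\mu_\epsilon\phi_\epsilon$. The other factor is then identical for $\phi$ and $\phi_0$. The dimension of the affected block drops by $2$, so every parity in \eqref{eq:endoscopic-mu-restrictions-geom} is preserved: the same $\mu_\pm,\mu'_\pm$ can be used for both parameters, and $\gamma^V_{\mu_+,\mu_-}\gamma^W_{\mu'_+,\mu'_-}$ is unchanged. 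So everything reduces to the single stable term containing $\phi_\epsilon$. We need
\[
S_\lambda\bigl(\Sigma_{\phi_\epsilon},\Sigma_{\phi'_\bullet}\bigr)=S_\lambda\bigl(\Sigma_{\phi_\epsilon\ominus2\mu_\epsilon^{-1}\chi_1},\Sigma_{\phi'_\bullet}\bigr).
\]
The two sides compare a Bessel/Fourier--Jacobi pair with a pair of smaller rank.

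To prove this identity, rewrite each side by \eqref{eq:Smu-special-component-elements} as $\sum_\alpha m(\Sigma^1,\Sigma^{z})$ for the corresponding pair.
\begin{itemize}
\item If that pair has strictly smaller rank than $(V,W)$, which happens unless $s_V$ and $s_W$ are both central, the inductive hypothesis computes both sides explicitly. By the epsilon identity above they become the same signed count $\pm1$: exactly one distinguished pair exists, with sign given by its root-number character evaluated at $z$.
\item If $s_V$ and $s_W$ are central, use the fact that an irreducible character of $A_\phi\times A_{\phi'}$ is determined by its values away from the central elements, up to the finitely many relations among the central values. Alternatively, compare the central values using $\sum_{s}F(s)=|A|\cdot\#\{\text{distinguished}\}$, which follows from multiplicity at most one together with existence. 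Note that the theorem already allows us to use smaller-rank pairs.
\end{itemize}

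The main obstacle is this central case, where neither side can be reduced to smaller rank. I would try a parabolic argument first. Since $m_1\geq 3$, $\phi=\phi_0\oplus\chi_1\oplus\chi_1$, and the extra pair $\chi_1\oplus\chi_1$ can be treated like the paired bad-parity summand of the first reduction. Concretely, $\pi(\phi,\eta)$ embeds into $\operatorname{Ind}(\chi_1\circ\det\boxtimes\pi(\phi_0,\eta))$, taken over $\mathrm{GL}_1(\mathbb C)\times U(V_0)$. Here $A_\phi=A_{\phi_0}$ because $m_1-2\geq 1$, and for tempered parameters this induced representation is irreducible, by \cite[Theorem~5.3]{MR19}. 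Then \cite[Proposition~7.4.1]{BP20} gives $m(\pi(\phi,\eta),\sigma)=m(\pi(\phi_0,\eta),\sigma)$ directly. If this works, the endoscopic comparison is needed only as a consistency check. The remaining point to verify carefully is that the packet bijection matches characters under $A_{\phi_0}\simeq A_\phi$ with the Whittaker normalization.
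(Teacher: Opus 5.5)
\textbf{The endoscopic comparison has gaps.} Your endoscopic argument does not work as written.

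First, $s_V\in A_\phi$ only prescribes the determinant of a lift on the multiplicity space of $\chi_1$ (the $O(m_1)$-factor of the centralizer). So when $m_1$ is even and the $a_1$-coordinate of $s_V$ is $-1$, no lift acts on all copies of $\chi_1$ by a single sign. This is harmless in itself, because copies can be moved between the two blocks in pairs without changing $s_V$ or any parity. But it shows that the block receiving $2\chi_1$ is a choice, and you leave it uncontrolled.

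That choice matters. With the labels forced by \eqref{eq:third-reduction-parity-condition}, the factor $S_{\mu_+\mu'_-}(\Sigma_{V_+},\Sigma_{W_-})$ has $\dim V_+\equiv\dim W_-\pmod 2$, i.e.\ it is of Fourier--Jacobi type. The formula \eqref{eq:Smu-special-component-elements} identifies only Bessel-type stable terms with summed multiplicities, and the induction hypothesis concerns only relevant Bessel pairs. So when $2\chi_1\subset\mu_+\phi_+$, your bullet-point argument has nothing to apply to. This happens, for instance, for $\phi=3\chi_1\oplus\chi_2\oplus\chi_3$, $\dim W=2$, $s_V=a_2$ lifted by $\pm1$ on isotypic components, and $s_W=1$. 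The paper avoids this by choosing the lift so that at least two copies of $\chi_i$ lie in the minus block, which $m_i\geq3$ allows. The Fourier--Jacobi factor is then identical for $\phi$ and $\phi^{(i)}$ and cancels in \eqref{eq:multiplicity-reduction-ratio}.

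Your fallback for the central case is also wrong. By orthogonality, $\sum_s F(s)=|A_\phi\times A_{\phi'}|\,m(\pi_{\mathbf 1},\sigma_{\mathbf 1})$, where $(\pi_{\mathbf 1},\sigma_{\mathbf 1})$ is the pair with trivial characters. It is $F(1,1)$, not this sum, that counts distinguished pairs. Two tools do the job instead:
\begin{enumerate}
\item the freedom in the lift: since $m_1\geq2$, even the central elements of $A_\phi$ have non-scalar lifts;
\item multiplicity one together with existence and uniqueness \cite{BP20}: these make $F_\phi$ and $F_{\phi_0}$ characters of $A_\phi\times A_{\phi'}$, so agreement on a generating set suffices. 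The paper uses this to recover the values at $s_W=1$.
\end{enumerate}

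\textbf{The parabolic argument is a complete proof by a different route.} Your last paragraph makes the endoscopic part unnecessary. Since $\chi_1^{c,\vee}=\chi_1$, the parameter $\phi=\chi_1\oplus\phi_0\oplus\chi_1$ factors through the Levi $\operatorname{Res}_{\mathbb C/\mathbb R}\mathrm{GL}_1\times U(V_0)$. Because $\chi_1$ still occurs in $\phi_0$, the map $A_{\phi_0}\to A_\phi$ is an isomorphism (carrying $z_{\phi_0}$ to $z_\phi$). Hence the $R$-group is trivial and $\operatorname{Ind}(\chi_1\boxtimes\pi(\phi_0,\eta))=\pi(\phi,\eta)$. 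The character bookkeeping is supplied by \cite[Theorem~5.3]{MR19}, the same reference the paper uses in the first reduction. Then \cite[Proposition~7.4.1]{BP20} (including the case $t=0$, where the pair becomes $(W,V_0)$) and your root-number identity finish the proof.

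This is exactly where $m_1\geq3$ is needed. For $m_1=2$ the induced representation has $R$-group $\mathbb Z/2$ and splits, as for $D_1^\pm\boxtimes\chi_\ell$ in Section~\ref{sec:basic-case}.

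\textbf{Comparison with the paper.} Your route is shorter and gives equality of multiplicities member by member, rather than only for the Fourier transform summed over pure inner forms. It needs neither the induction hypothesis nor any stable term. What it imports is the tempered induction theory, which the paper invokes only for bad-parity pairs.

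The paper's endoscopic ratio argument stays inside the framework of the geometric multiplicity formula and endoscopic character identities; its Remark even proposes redoing the first reduction that way. It is also the template for the third and fourth reductions. Those remove a single constituent or two distinct ones, and they have no parabolic counterpart.
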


\begin{proof}
We give the argument on the $V$-side; the $W$-side is identical. Suppose
$m_i\geq3$, $s_V\times s_W\in A_\phi\times A_{\phi'}$ satisfies the condition $dim(V^+)-dim(W^-)$ is even and $dim(V^-)-dim(W^+)$ is odd. For $s_W\ne1\in A_{\phi'}$ ,choose a lift of $s_V$ such that the summand
$(m_i-1)\chi_i$ lies in the minus endoscopic factor.  Thus
\begin{equation}\label{eq:old-endoscopic-decomposition}
  \phi=\mu_+\phi_+\oplus\mu_-\phi_-,
  \qquad (m_i-1)\chi_i\subset\mu_-\phi_-.
\end{equation}
Define
\begin{equation}\label{eq:reduced-multiplicity-parameter}
  \phi^{(i)}
   :=(m_i-2)\chi_i\oplus
     \bigoplus_{j\neq i}m_j\chi_j.
\end{equation}
It is a good-parity parameter of dimension $\dim V-2$.  Let $V^{(i)}$ be a
Hermitian space of this dimension.  We keep the plus endoscopic factor
unchanged and replace $V_-$ by $V_-^{(i)}$:
\begin{equation}\label{eq:dimension-reduced-endoscopy}
  \dim V^{(i)}=\dim V-2,\qquad
  \dim V_-^{(i)}=\dim V_--2,\qquad
  V_+^{(i)}=V_+.
\end{equation}
At the level of parameters,
\begin{equation}\label{eq:new-endoscopic-decomposition}
  \phi^{(i)}=\mu_+\phi_+\oplus\mu_-\phi_-^{(i)},
  \qquad
  \mu_-\phi_-=(\mu_-\phi_-^{(i)})\oplus2\chi_i.
\end{equation}

Since the two relevant dimensions have both changed by two, the parity
conditions defining the endoscopic $L$-embedding are unchanged.  Hence the
same pair $(\mu_+,\mu_-)$ can be used for $V^{(i)}$.  The corresponding
commutative diagram is
\begin{equation}\label{diag:endoscopic-parameter-V-reduced}
\begin{tikzcd}[column sep=large,row sep=large]
W_{\mathbb R}
  \arrow{r}{\phi^{(i)}}
  \arrow[swap]{dr}{(\phi_+,\phi_-^{(i)})}
& {}^LU(V^{(i)})
  \arrow{r}{\mathrm{std}}
& {}^L\!\operatorname{Res}_{\mathbb C/\mathbb R}\mathrm{GL}(V^{(i)})\\
& {}^L\!\bigl(U(V_+)\times U(V_-^{(i)})\bigr)
  \arrow[swap]{u}{\zeta_{\mu_+,\mu_-}}
&
\end{tikzcd}
\end{equation}
and is obtained from \eqref{diag:endoscopic-parameter-V} by lowering the
dimension of the minus block by two.

Because $m_i-2>0$, the corresponding generator is still present, and there is
a natural identification
\begin{equation}\label{eq:component-group-multiplicity-reduction}
  A_{\phi^{(i)}}\simeq A_\phi.
\end{equation}
Applying \eqref{eq:endoscopic-reduction-summed} to $(\phi,\phi')$ and then to
$(\phi^{(i)},\phi')$ gives
\begin{align}
 &\sum_{\alpha\in H^1(\mathbb R,U(W))}
 m\!\left({\Sigma}^{s_W}_{W_\alpha,\phi'},
          {\Sigma}^{s_V}_{V_\alpha,\phi}\right)
 =\gamma^V_{\mu_+,\mu_-}\gamma^W_{\mu_+',\mu_-'}S_{\mu_+\mu'_-}({\Sigma}_{V_+},{\Sigma}_{W_-})
  S_{\mu_-\mu'_+}({\Sigma}_{V_-},{\Sigma}_{W_+}),
                                                        \label{eq:old-reduction-product}\\
 &\sum_{\alpha\in H^1(\mathbb R,U(W))}
 m\!\left({\Sigma}^{s_W}_{W_\alpha,\phi'},
          {\Sigma}^{s_V}_{V^{(i)}_\alpha,\phi^{(i)}}\right)
 =\gamma^{V^{(i)}}_{\mu_+,\mu_-}\gamma^W_{\mu_+',\mu_-'}S_{\mu_+\mu'_-}({\Sigma}_{V_+},{\Sigma}_{W_-})
  S_{\mu_-\mu'_+}({\Sigma}_{V_-^{(i)}},{\Sigma}_{W_+}).
                                                        \label{eq:new-reduction-product}
\end{align}
Consequently,
\begin{align}
 &\sum_{\alpha\in H^1(\mathbb R,U(W))}
 m\!\left({\Sigma}^{s_W}_{W_\alpha,\phi'},
          {\Sigma}^{s_V}_{V_\alpha,\phi}\right)
 =
 \frac{
  \displaystyle
  \sum_{\alpha\in H^1(\mathbb R,U(W))}
  m\!\left({\Sigma}^{s_W}_{W_\alpha,\phi'},
            {\Sigma}^{s_V}_{V^{(i)}_\alpha,\phi^{(i)}}\right)
 }{
  S_{\mu_-\mu'_+}({\Sigma}_{V_-^{(i)}},{\Sigma}_{W_+})
 }
 S_{\mu_-\mu'_+}({\Sigma}_{V_-},{\Sigma}_{W_+}).
                                                        \label{eq:multiplicity-reduction-ratio}
\end{align}
The two stable terms on the last line are attached to smaller Gross--Prasad
pairs, hence are known by induction.

It remains to compare the character prescribed by the epsilon dichotomy.
For every parameter $\rho$, multiplicativity of local root numbers gives
\begin{equation}\label{eq:epsilon-remove-two-copies}
  \frac{\epsilon(\frac12,\mu_-\phi_-\otimes\rho,\psi_{\mathbb C})}
       {\epsilon(\frac12,\mu_-\phi_-^{(i)}\otimes\rho,\psi_{\mathbb C})}
   =\epsilon(\tfrac12,\chi_i\otimes\rho,\psi_{\mathbb C})^2=1.
\end{equation}
Thus deleting two copies of $\chi_i$ does not change the Xue character under
\eqref{eq:component-group-multiplicity-reduction}.  Combining
\eqref{eq:multiplicity-reduction-ratio} and
\eqref{eq:epsilon-remove-two-copies} proves the reduction, and the multiplication of some character values on $s_W= -1\in A_{\phi'}$ could get the value on $s_W=1$. Repeating the
argument on both sides yields $m_i,n_j\leq2$ for all $i,j$.
\end{proof}

\begin{rem}
    Actually we could use the same method in \eqref{thm:second-reduction} to prove the first reduction. The explicit construction method is more explicit before, and this gives a new viewpoint about the "induced representation" under the endoscopic theory.
\end{rem}

\subsubsection{The third reduction}
\label{subsec:third-reduction}

We now assume the conclusion of Theorem~\ref{thm:second-reduction}.  Thus the
parameters are still of good parity and every multiplicity is at most two.
The next step reduces the size of the component groups.  The construction is
the unitary analogue of the endoscopic descent used in \cite[\S5]{CL22}.

For later use, if $s_V\in A_\phi$ and $s_W\in A_{\phi'}$, choose lifts and
write the corresponding decompositions as
\[
  \phi=\phi_{V,+}\oplus\phi_{V,-},
  \qquad
  \phi'=\phi'_{W,+}\oplus\phi'_{W,-}.
\]
Here the notation includes the endoscopic twisting characters; equivalently,
if
\[
  \phi=\mu_+\phi_+\oplus\mu_-\phi_-,
  \qquad
  \phi'=\mu'_+\phi'_+\oplus\mu'_-\phi'_-,
\]
then
\[
  \phi_{V,+}=\mu_+\phi_+,\quad
  \phi_{V,-}=\mu_-\phi_-,
  \qquad
  \phi'_{W,+}=\mu'_+\phi'_+,\quad
  \phi'_{W,-}=\mu'_-\phi'_-.
\]
We choose the labels so that
\begin{equation}\label{eq:cross-parity-third-reduction}
  \dim V_+-\dim W_-\equiv0\pmod2,
  \qquad
  \dim V_- -\dim W_+\equiv1\pmod2.
\end{equation}
With this convention the value of the character prescribed by
\cite[Theorem~2.2]{Xue23} can be written
\begin{equation}\label{eq:xue-character-endoscopic-form}
 \eta_{\phi\times\phi'}(s_V,s_W)
  =
  \epsilon\!\left(\tfrac12,\phi_{V,-}\otimes\phi',\psi_{\mathbb C}\right)
  \epsilon\!\left(\tfrac12,\phi\otimes\phi'_{W,-},\psi_{\mathbb C}\right).
\end{equation}
By multiplicativity, this is equivalently
\begin{equation}\label{eq:xue-character-cross-form}
 \eta_{\phi\times\phi'}(s_V,s_W)
 =
 \epsilon\!\left(\tfrac12,\phi_{V,-}\otimes\phi'_{W,+},\psi_{\mathbb C}\right)
 \epsilon\!\left(\tfrac12,\phi_{V,+}\otimes\phi'_{W,-},\psi_{\mathbb C}\right),
\end{equation}
because the factor
$\epsilon(\frac12,\phi_{V,-}\otimes\phi'_{W,-},\psi_{\mathbb C})$
occurs twice.

\begin{thm}[Third reduction: pointwise endoscopic descent]\label{thm:third-reduction}
Assume that Conjecture~2 is known for all relevant Gross--Prasad pairs of
strictly smaller rank, and assume that Theorem~\ref{thm:second-reduction} has
already been applied.  Fix an element
\[
  (s_V,s_W)\in A_\phi\times A_{\phi'}
\]
and let
\[
  U(V_+)\times U(V_-),\qquad U(W_+)\times U(W_-)
\]
be the corresponding endoscopic groups.  Choose the labels so that
\begin{equation}\label{eq:third-reduction-parity-condition}
  \dim V_+-\dim W_-\equiv0\pmod2,
  \qquad
  \dim V_- -\dim W_+\equiv1\pmod2.
\end{equation}
Assume moreover that
\begin{equation}\label{eq:third-reduction-nontriviality-condition}
  V_-\neq0,\qquad W_+\neq0,
  \qquad\text{and}\qquad
  \bigl(V_-\neq V\ \text{or}\ W_+\neq W\bigr).
\end{equation}
Then the epsilon-dichotomy identity attached to the fixed element
$(s_V,s_W)$ is reduced, through the endoscopic reduction formula, to
identities for relevant Gross--Prasad pairs of strictly smaller rank.
\end{thm}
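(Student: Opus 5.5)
The plan is to apply the summed endoscopic reduction formula \eqref{eq:endoscopic-reduction-summed} to the fixed pair $(s_V,s_W)$. With the labels chosen as in \eqref{eq:third-reduction-parity-condition}, the left-hand side is
\[
  \sum_{\alpha}\sum_{\pi,\sigma}\eta_\pi(s_V)\eta_\sigma(s_W)\,m(\pi,\sigma),
\]
and the right-hand side is
\[
  \gamma^V_{\mu_+,\mu_-}\gamma^W_{\mu'_+,\mu'_-}\,
  S_{\mu_+\mu'_-}(\Sigma_{V_+},\Sigma_{W_-})\,
  S_{\mu_-\mu'_+}(\Sigma_{V_-},\Sigma_{W_+}).
\]
The parity condition guarantees the following. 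The pair $(V_-,W_+)$ has dimensions of opposite parity, so it is a Bessel-type relevant pair. The pair $(V_+,W_-)$ has dimensions of equal parity. The characters $\mu_-\mu'_+$ and $\mu_+\mu'_-$ satisfy the restriction condition \eqref{eq:mu-restriction-Smu}, which is checked from \eqref{eq:endoscopic-mu-restrictions-geom}.

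The first step is to rewrite each stable factor as a multiplicity via \eqref{eq:Smu-special-component-elements}. That identity gives
\[
  S_{\mu_-\mu'_+}(\Sigma_{V_-},\Sigma_{W_+})
  =\sum_\alpha m\bigl(\Sigma^{1}_{W_{+,\alpha}},\Sigma^{z}_{V_{-,\alpha}}\bigr),
\]
and similarly for the other factor, where $z$ is the central element of the component group. The nontriviality condition \eqref{eq:third-reduction-nontriviality-condition} is exactly what makes this a reduction. Since $V_-\neq0$ and $W_+\neq0$, the factor $(V_-,W_+)$ is a genuine pair and not the trivial $(0,0)$ term of \eqref{eq:S00}. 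Since $V_-\neq V$ or $W_+\neq W$, the pair $(V_-,W_+)$ has strictly smaller total rank than $(V,W)$. The complementary pair $(V_+,W_-)$ is then also of smaller rank; if it is $(0,0)$, its factor is $1$.

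By the induction hypothesis, Conjecture~2 holds for these smaller pairs (the Fourier--Jacobi-type factor $(V_+,W_-)$ is treated as part of the inductive package, as the introduction indicates). So each right-hand sum is evaluated: exactly one packet member in the sum has multiplicity one, and its sign is the root-number character evaluated at $z$. This gives
\[
  S_{\mu_-\mu'_+}(\Sigma_{V_-},\Sigma_{W_+})
  =\epsilon\bigl(\tfrac12,\phi_{V,-}\otimes\phi'_{W,+},\psi_{\mathbb C}\bigr)
\]
up to the twists by $\mu_-\mu'_+$ that are absorbed in the definitions of $\phi_{V,-}$ and $\phi'_{W,+}$. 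The same argument gives the analogous formula for the other factor.

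The second step is to compare with \eqref{eq:xue-character-cross-form}. The product of the two root numbers is precisely $\eta_{\phi\times\phi'}(s_V,s_W)$. Consequently the left-hand side equals the value at $(s_V,s_W)$ of the predicted character, which is the pointwise identity attached to $(s_V,s_W)$.

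The main obstacle is the constant bookkeeping, in two parts. First, one must show that $\gamma^V\gamma^W$ cancels against the epsilon factors of the twisting characters $\mu_\pm,\mu'_\pm$. When $d_\pm$ are both odd, $\gamma^V=-i$, and this should match $\epsilon(\tfrac12,\mu_+\mu_-^{-1}\cdots)$-type factors produced by pulling the $\mu$'s out of the tensor products. Second, one must check that the normalization of \eqref{eq:Smu-special-component-elements}, with $s=z_\phi$ rather than $1$, introduces the sign $\eta(z)$ consistent with the Whittaker normalization. I would handle both by computing directly for $\mathbb C^\times$ characters $z\mapsto z^{a}\bar z^{-a}$, using $\epsilon(\tfrac12,\chi,\psi_{\mathbb C})=i^{\pm 2a}$-type formulas, and then tracking parities through \eqref{eq:third-reduction-parity-condition}.
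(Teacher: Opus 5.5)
Your proposal has a genuine gap: it evaluates the Fourier--Jacobi factor $S_{\mu_+\mu'_-}(\Sigma_{V_+},\Sigma_{W_-})$ by induction, and that is not available. By \eqref{eq:third-reduction-parity-condition}, $\dim V_+\equiv\dim W_-\pmod 2$, so this factor belongs to an equal-parity configuration. The induction hypothesis is Theorem~\ref{thm:main} for relevant pairs with $\dim V-\dim W$ odd. The identity \eqref{eq:Smu-special-component-elements}, which turns $S_\mu$ into a signed sum of multiplicities, comes from specializing the Bessel-type formula \eqref{eq:endoscopic-reduction-summed}. Neither statement says anything about an equal-parity pair, and no Fourier--Jacobi multiplicity formula is available in this framework. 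The introduction only remarks that this factor \emph{corresponds} to a Fourier--Jacobi model; it does not put it into the induction. So you cannot assert $S_{\mu_+\mu'_-}(\Sigma_{V_+},\Sigma_{W_-})=\epsilon(\tfrac12,\phi_{V,+}\otimes\phi'_{W,-},\psi_{\mathbb C})$, and the comparison with \eqref{eq:xue-character-cross-form} fails. The constant problem you flag is a symptom of this. When $\dim V_+$ and $\dim W_-$ are odd, exactly one of $\dim V_-,\dim W_+$ is odd, so $\gamma^V\gamma^W=-i$. Absorbing that would require knowing the normalization of the Fourier--Jacobi stable term.

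The missing idea is to eliminate this factor rather than compute it. This is where $V_-\neq0$ and $W_+\neq0$ are really used; in your argument they only serve to make the factor nontrivial. Pick constituents $\chi_1\subset\phi_{V,-}$ and $\chi'_1\subset\phi'_{W,+}$ and delete one copy of each. Then twist the results by $\xi$ and $\xi^{-1}$ with $\xi|_{\bbR^\times}=\operatorname{sgn}$. This gives a relevant pair of dimensions $(\dim V-1,\dim W-1)$ whose endoscopic characters can be taken to be $(\mu_+\xi,\mu_-)$ and $(\mu'_+,\mu'_-\xi^{-1})$. Since $(\mu_+\xi)(\mu'_-\xi^{-1})=\mu_+\mu'_-$, the factor $S_{\mu_+\mu'_-}(\Sigma_{V_+},\Sigma_{W_-})$ is literally unchanged, while the Bessel factor becomes $S_{\mu_-\mu'_+}(\Sigma_{V_-^{\mathrm{new}}},\Sigma_{W_+^{\mathrm{new}}})$. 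Dividing the two instances of \eqref{eq:endoscopic-reduction-summed} expresses $\mathcal M_{\phi,\phi'}(s_V,s_W)$ through $\mathcal M$ of the new pair and two Bessel-type $S$-terms, all of smaller rank. The denominator is $\pm1$ by induction, hence nonzero. Under the parity condition, $\gamma^V\gamma^W$ takes the same value ($1$ or $-i$) for the old and new data, so the constants cancel in the ratio and no direct root-number calculation is needed. The final identification with $\eta_{\phi\times\phi'}(s_V,s_W)$ then uses only multiplicativity of root numbers and the cancellation of $\xi$ against $\xi^{-1}$.
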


\begin{proof}
The proof is the descent constructed in the handwritten argument.  Since
$V_-\neq0$ and $W_+\neq0$, choose constituents
\[
  \chi_1\subset\phi_{V,-},
  \qquad
  \chi'_1\subset\phi'_{W,+}.
\]
After reordering \eqref{eq:good-parity-two-parameters}, write these as the
first constituents.  Choose a unitary character $\xi$ of $\mathbb C^\times$
such that
\begin{equation}\label{eq:xi-sign-character}
  \xi|_{\mathbb R^\times}=\operatorname{sgn}.
\end{equation}
Define parameters of dimensions one less by
\begin{align}
 \phi^{\mathrm{new}}
  &=
  \left((m_1-1)\chi_1
       \oplus\bigoplus_{i=2}^{M}m_i\chi_i\right)\otimes\xi,
                                                        \label{eq:third-new-phi}\\
 {\phi'}^{\mathrm{new}}
  &=
  \left((n_1-1)\chi'_1
       \oplus\bigoplus_{j=2}^{N}n_j\chi'_j\right)\otimes\xi^{-1}.
                                                        \label{eq:third-new-phiprime}
\end{align}
The twist by $\xi$ changes the conjugate-self-dual sign, exactly compensating
for the change of dimension.  Hence these are parameters for Hermitian spaces
$V^{\mathrm{new}}$ and $W^{\mathrm{new}}$ with
\[
  \dim V^{\mathrm{new}}=\dim V-1,
  \qquad
  \dim W^{\mathrm{new}}=\dim W-1.
\]

The construction is compatible with the original endoscopic factorizations.
On the $V$-side, the old factorization is
\begin{equation}\label{diag:third-old-V}
\begin{tikzcd}[column sep=large,row sep=large]
W_{\mathbb R}
  \arrow{r}{\phi}
  \arrow[swap]{dr}{(\phi_+,\phi_-)}
& {}^LU(V)
  \arrow{r}{\mathrm{std}}
& {}^L\!\operatorname{Res}_{\mathbb C/\mathbb R}\mathrm{GL}(V)\\
& {}^L\!\bigl(U(V_+)\times U(V_-)\bigr)
  \arrow[swap]{u}{\zeta_{\mu_+,\mu_-}}
&
\end{tikzcd}
\end{equation}
whereas after deleting $\chi_1$ from the minus block it becomes
\begin{equation}\label{diag:third-new-V}
\begin{tikzcd}[column sep=large,row sep=large]
W_{\mathbb R}
  \arrow{r}{\phi^{\mathrm{new}}}
  \arrow[swap]{dr}{(\phi_+,\phi_-^{\mathrm{new}})}
& {}^LU(V^{\mathrm{new}})
  \arrow{r}{\mathrm{std}}
& {}^L\!\operatorname{Res}_{\mathbb C/\mathbb R}
       \mathrm{GL}(V^{\mathrm{new}})\\
& {}^L\!\bigl(U(V_+)\times U(V_-^{\mathrm{new}})\bigr)
  \arrow[swap]{u}{\zeta_{\mu_+\xi,\mu_-}}
&
\end{tikzcd}
\end{equation}
with $\dim V_-^{\mathrm{new}}=\dim V_--1$.  Only the character attached to the
plus block changes; this is the parity change in the endoscopic embedding when
the dimension of the minus block drops by one.

Likewise, on the $W$-side,
\begin{equation}\label{diag:third-old-W}
\begin{tikzcd}[column sep=large,row sep=large]
W_{\mathbb R}
  \arrow{r}{\phi'}
  \arrow[swap]{dr}{(\phi'_+,\phi'_-)}
& {}^LU(W)
  \arrow{r}{\mathrm{std}}
& {}^L\!\operatorname{Res}_{\mathbb C/\mathbb R}\mathrm{GL}(W)\\
& {}^L\!\bigl(U(W_+)\times U(W_-)\bigr)
  \arrow[swap]{u}{\zeta_{\mu'_+,\mu'_-}}
&
\end{tikzcd}
\end{equation}
is replaced by
\begin{equation}\label{diag:third-new-W}
\begin{tikzcd}[column sep=large,row sep=large]
W_{\mathbb R}
  \arrow{r}{{\phi'}^{\mathrm{new}}}
  \arrow[swap]{dr}{({\phi'_+}^{\mathrm{new}},\phi'_-)}
& {}^LU(W^{\mathrm{new}})
  \arrow{r}{\mathrm{std}}
& {}^L\!\operatorname{Res}_{\mathbb C/\mathbb R}
       \mathrm{GL}(W^{\mathrm{new}})\\
& {}^L\!\bigl(U(W_+^{\mathrm{new}})\times U(W_-)\bigr)
  \arrow[swap]{u}{\zeta_{\mu'_+,\mu'_-\xi^{-1}}}
&
\end{tikzcd}
\end{equation}
with $\dim W_+^{\mathrm{new}}=\dim W_+-1$.  Notice that
\begin{equation}\label{eq:third-twist-cancellation}
  (\mu_+\xi)(\mu'_-\xi^{-1})=\mu_+\mu'_-,
  \qquad
  \mu_-\mu'_+=\mu_-\mu'_+.
\end{equation}
Thus the twisting character in each cross Gross--Prasad factor is unchanged
by the descent.

For brevity put
\begin{equation}\label{eq:calM-definition}
 \mathcal M_{\phi,\phi'}(s_V,s_W)
 :=
 \sum_{\alpha\in H^1(\mathbb R,U(W))}
 m\!\left({\Sigma}^{s_W}_{W_\alpha,\phi'},
          {\Sigma}^{s_V}_{V_\alpha,\phi}\right).
\end{equation}
The summed endoscopic reduction formula gives
\begin{align}
 \mathcal M_{\phi,\phi'}(s_V,s_W)
 &=\gamma^V_{\mu_+,\mu_-}\gamma^W_{\mu_+',\mu_-'}
 S_{\mu_+\mu'_-}({\Sigma}_{V_+},{\Sigma}_{W_-})
 S_{\mu_-\mu'_+}({\Sigma}_{V_-},{\Sigma}_{W_+}),
                                                        \label{eq:third-old-product}\\
 \mathcal M_{\phi^{\mathrm{new}},{\phi'}^{\mathrm{new}}}
   (s_V^{\mathrm{new}},s_W^{\mathrm{new}})
 &=\gamma^{V^{\mathrm{new}}}_{\mu_+\xi,\mu_-}\gamma^{W^{\mathrm{new}}}_{\mu_+',\mu_-'\xi^{-1}}
 S_{\mu_+\mu'_-}({\Sigma}_{V_+},{\Sigma}_{W_-})
 S_{\mu_-\mu'_+}
   ({\Sigma}_{V_-^{\mathrm{new}}},
    {\Sigma}_{W_+^{\mathrm{new}}}).
                                                        \label{eq:third-new-product}
\end{align}
Here $s_V^{\mathrm{new}}$ and $s_W^{\mathrm{new}}$ are the elements induced by
the displayed endoscopic factorizations.  Dividing the two identities gives
\begin{align}
 \mathcal M_{\phi,\phi'}(s_V,s_W)
 &=
 \frac{
 \mathcal M_{\phi^{\mathrm{new}},{\phi'}^{\mathrm{new}}}
 (s_V^{\mathrm{new}},s_W^{\mathrm{new}})
 }{
 S_{\mu_-\mu'_+}
  ({\Sigma}_{V_-^{\mathrm{new}}},
   {\Sigma}_{W_+^{\mathrm{new}}})
 }
 S_{\mu_-\mu'_+}({\Sigma}_{V_-},{\Sigma}_{W_+}).
                                                        \label{eq:third-ratio}
\end{align}
Condition \eqref{eq:third-reduction-nontriviality-condition} is precisely what
ensures that the cross pair $(W_+,V_-)$ occurring here is not the original
pair, while the new pair has both dimensions lowered by one.  Hence every term
on the right is attached to a strictly smaller-rank Gross--Prasad problem (the
other cross factor may be $(0,0)$, in which case we use
\eqref{eq:S00}).

We finally compare the character prescribed by the epsilon dichotomy.  By the
induction hypothesis, the right-hand side of \eqref{eq:third-ratio} is
evaluated by the corresponding root numbers.  Using
\eqref{eq:xue-character-endoscopic-form} for the new pair and the cancelation
of the twists $\xi$ and $\xi^{-1}$, it becomes
\begin{align}
 &\eta_{\phi^{\mathrm{new}}\times{\phi'}^{\mathrm{new}}}
      (s_V^{\mathrm{new}},s_W^{\mathrm{new}})
 \frac{
  \epsilon(\frac12,\phi_{V,-}\otimes\phi'_{W,+},\psi_{\mathbb C})
 }{
  \epsilon(\frac12,
    (\phi_{V,-}-\chi_1)\otimes(\phi'_{W,+}-\chi'_1),
    \psi_{\mathbb C})
 }                                                     \notag\\
 &\quad=
 \frac{
  \epsilon(\frac12,
     (\phi_{V,-}-\chi_1)\otimes(\phi'-\chi'_1),
     \psi_{\mathbb C})
  \epsilon(\frac12,
     (\phi-\chi_1)\otimes\phi'_{W,-},
     \psi_{\mathbb C})
 }{
  \epsilon(\frac12,
     (\phi_{V,-}-\chi_1)\otimes(\phi'_{W,+}-\chi'_1),
     \psi_{\mathbb C})
 }
 \epsilon(\tfrac12,\phi_{V,-}\otimes\phi'_{W,+},\psi_{\mathbb C}) \notag\\
 &\quad=
 \epsilon(\tfrac12,\phi_{V,+}\otimes\phi'_{W,-},\psi_{\mathbb C})
 \epsilon(\tfrac12,\phi_{V,-}\otimes\phi'_{W,+},\psi_{\mathbb C}) \notag\\
 &\quad=
 \epsilon(\tfrac12,\phi\otimes\phi'_{W,-},\psi_{\mathbb C})
 \epsilon(\tfrac12,\phi_{V,-}\otimes\phi',\psi_{\mathbb C})
 =\eta_{\phi\times\phi'}(s_V,s_W).
                                                        \label{eq:third-epsilon-computation}
\end{align}
This proves the desired descent for the fixed component-group element
$(s_V,s_W)$.
\end{proof}

\subsubsection{The fourth reduction}
\label{subsec:fourth-reduction}

The preceding reduction removes one constituent simultaneously from the two
crossed endoscopic blocks.  We shall also use a second, slightly different,
reduction in which two distinct constituents are removed from the same
endoscopic block.  This is the fourth reduction in the argument.

\begin{thm}[Fourth reduction]\label{thm:fourth-reduction}
Assume that Conjecture~2 is known for all relevant Gross--Prasad pairs of
strictly smaller rank.  Let
\[
  \phi=m_1\phi_1\oplus m_2\phi_2\oplus
       \bigoplus_{i=3}^{M}m_i\phi_i,
  \qquad
  \phi'=\bigoplus_{j=1}^{N}n_j\phi'_j
\]
be good-parity tempered parameters, and fix
$(s_V,s_W)\in A_\phi\times A_{\phi'}$.  Choose the labels of the associated
endoscopic factors so that
\[
  \dim V_+-\dim W_-\equiv0\pmod2,
  \qquad
  \dim V_- -\dim W_+\equiv1\pmod2,
\]
\[
  V_-\ne V \qquad \text{or} \qquad W^+\neq W
\]
Suppose that $\phi_1$ and $\phi_2$ are two distinct irreducible constituents
of the minus block $\phi_{V,-}$.  Then the epsilon-dichotomy identity at
$(s_V,s_W)$ reduces to identities for strictly smaller-rank Gross--Prasad
pairs.
\end{thm}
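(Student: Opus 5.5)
The plan is to imitate the proof of Theorem~\ref{thm:third-reduction}: apply the summed endoscopic reduction formula \eqref{eq:endoscopic-reduction-summed} twice, once to the original pair and once to a modified pair, divide, and then check the root numbers.

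\textbf{Construction of the smaller pair.} I would delete one copy each of $\phi_1$ and $\phi_2$ from the minus block and leave $\phi'$ untouched:
\[
  \phi^{\mathrm{new}}:=(m_1-1)\phi_1\oplus(m_2-1)\phi_2\oplus\bigoplus_{i\ge3}m_i\phi_i .
\]
Then $\dim V^{\mathrm{new}}=\dim V-2$, $\dim V_-^{\mathrm{new}}=\dim V_--2$ and $V_+^{\mathrm{new}}=V_+$. Since both dimensions drop by two, no twist is needed. As in Theorem~\ref{thm:second-reduction}, the same endoscopic characters $(\mu_+,\mu_-)$ serve for $V^{\mathrm{new}}$, and the analogous commutative diagram holds. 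The parity conditions \eqref{eq:third-reduction-parity-condition} are preserved, and $\gamma^{V^{\mathrm{new}}}_{\mu_+,\mu_-}=\gamma^V_{\mu_+,\mu_-}$ because the parities of $d_\pm$ are unchanged.

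Next I would define $s_V^{\mathrm{new}}$ as the element acting by $+1$ on $\phi_{V,+}$ and by $-1$ on $\phi_{V,-}^{\mathrm{new}}$. Some care is needed here. If $m_1=1$, say, the generator $a_1$ disappears and $A_{\phi^{\mathrm{new}}}$ is a quotient of $A_\phi$. I would handle this by noting that $\phi_1$ and $\phi_2$ lie in the same block, so $s_V$ takes a single value on them. Hence $s_V$ comes from an element of $A_{\phi^{\mathrm{new}}}$, and the same holds for the corresponding character after the identification.

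\textbf{Comparison via the reduction formula.} Applying \eqref{eq:endoscopic-reduction-summed} to the old and new pairs, the factor $S_{\mu_+\mu'_-}(\Sigma_{V_+},\Sigma_{W_-})$ is common to both. This gives
\[
 \mathcal M_{\phi,\phi'}(s_V,s_W)=\frac{\mathcal M_{\phi^{\mathrm{new}},\phi'}(s_V^{\mathrm{new}},s_W)}{S_{\mu_-\mu'_+}(\Sigma_{V_-^{\mathrm{new}}},\Sigma_{W_+})}\,S_{\mu_-\mu'_+}(\Sigma_{V_-},\Sigma_{W_+}),
\]
where $\mathcal M$ is as in \eqref{eq:calM-definition}.

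The hypothesis $V_-\neq V$ or $W_+\neq W$ guarantees that $(V_-,W_+)$ is a strictly smaller Gross--Prasad pair than $(V,W)$. The pairs $(V_-^{\mathrm{new}},W_+)$ and $(V^{\mathrm{new}},W)$ are smaller still. By \eqref{eq:Smu-special-component-elements}, each $S$-term is itself a sum of multiplicities on a smaller pair, so by induction it equals the corresponding root number. In particular it is $\pm1$, which justifies the division.

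\textbf{Epsilon bookkeeping.} Using the cross form \eqref{eq:xue-character-cross-form}, the induction hypothesis gives the right-hand side as
\[
\epsilon(\tfrac12,(\phi_{V,-}-\phi_1-\phi_2)\otimes\phi'_{W,+})\,\epsilon(\tfrac12,\phi_{V,+}\otimes\phi'_{W,-})\cdot\frac{\epsilon(\tfrac12,\phi_{V,-}\otimes\phi'_{W,+})}{\epsilon(\tfrac12,(\phi_{V,-}-\phi_1-\phi_2)\otimes\phi'_{W,+})}.
\]
By multiplicativity this collapses to $\eta_{\phi\times\phi'}(s_V,s_W)$, which is the required identity at $(s_V,s_W)$.

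\textbf{Main obstacle.} The delicate point is the identification of $s_V^{\mathrm{new}}$ and of the Xue characters when some $m_i=1$, so that a generator is lost. One must show that the relation in the quotient $A_{\phi^{\mathrm{new}}}$ matches the relation on root-number characters. Concretely, this should come down to the fact that $\epsilon(\tfrac12,\phi_1\otimes\phi')\,\epsilon(\tfrac12,\phi_2\otimes\phi')$ is exactly the factor removed. I would verify it directly from \eqref{eq:xue-character-endoscopic-form}. I would also check that the labelling convention (plus versus minus block) is still compatible with $z_\phi$ after the change.
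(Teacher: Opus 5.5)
Your proposal is correct and takes essentially the same route as the paper. You delete one copy each of $\phi_1,\phi_2$ from the minus block while keeping $(\mu_+,\mu_-)$, divide the two instances of \eqref{eq:endoscopic-reduction-summed} so that $S_{\mu_+\mu'_-}(\Sigma_{V_+},\Sigma_{W_-})$ cancels, and collapse the root numbers by multiplicativity. Your extra remarks (the $\gamma$-factors agree, the denominator is $\pm1$) make explicit what the paper leaves implicit, and the worry about lost generators is unnecessary: $s_V^{\mathrm{new}}$ and the Xue character are read off directly from the endoscopic blocks.
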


\begin{proof}
Remove one copy of each of $\phi_1$ and $\phi_2$ and put
\begin{equation}\label{eq:fourth-new-parameter}
  \phi^{\dagger}
  :=(m_1-1)\phi_1\oplus(m_2-1)\phi_2
     \oplus\bigoplus_{i=3}^{M}m_i\phi_i.
\end{equation}
Since the dimension drops by two, the good-parity sign is unchanged; hence
$\phi^{\dagger}$ is a tempered parameter for a Hermitian space
$V^{\dagger}$ with
\[
  \dim V^{\dagger}=\dim V-2.
\]
The two removed constituents lie in the minus block, so the corresponding
endoscopic decomposition satisfies
\[
  \dim V_+^{\dagger}=\dim V_+,
  \qquad
  \dim V_-^{\dagger}=\dim V_- -2.
\]
In particular, the parity of both endoscopic dimensions is unchanged, and the
same pair of endoscopic characters $(\mu_+,\mu_-)$ can be used. Thus, the old
and new endoscopic factorizations fit into the diagrams
\begin{equation}\label{diag:fourth-old-V}
\begin{tikzcd}[column sep=large,row sep=large]
W_{\mathbb R}
  \arrow{r}{\phi}
  \arrow[swap]{dr}{(\phi_+,\phi_-)}
& {}^LU(V)
  \arrow{r}{\mathrm{std}}
& {}^L\!\operatorname{Res}_{\mathbb C/\mathbb R}\mathrm{GL}(V)\\
& {}^L\!\bigl(U(V_+)\times U(V_-)\bigr)
  \arrow[swap]{u}{\zeta_{\mu_+,\mu_-}}
&
\end{tikzcd}
\end{equation}
and
\begin{equation}\label{diag:fourth-new-V}
\begin{tikzcd}[column sep=large,row sep=large]
W_{\mathbb R}
  \arrow{r}{\phi^{\dagger}}
  \arrow[swap]{dr}{(\phi_+,\phi_-^{\dagger})}
& {}^LU(V^{\dagger})
  \arrow{r}{\mathrm{std}}
& {}^L\!\operatorname{Res}_{\mathbb C/\mathbb R}\mathrm{GL}(V^{\dagger})\\
& {}^L\!\bigl(U(V_+)\times U(V_-^{\dagger})\bigr)
  \arrow[swap]{u}{\zeta_{\mu_+,\mu_-}}
&
\end{tikzcd}.
\end{equation}
This is simply the restriction of the Langlands correspondence compatible
with the original endoscopic datum.

The geometric endoscopic reduction formula gives
\begin{align}
 \mathcal M_{\phi,\phi'}(s_V,s_W)
 &=\gamma^V_{\mu_+,\mu_-}\gamma^W_{\mu_+',\mu_-'}S_{\mu_+\mu'_-}({\Sigma}_{V_+},{\Sigma}_{W_-})
   S_{\mu_-\mu'_+}({\Sigma}_{V_-},{\Sigma}_{W_+}),
                                                        \label{eq:fourth-old-product}\\
 \mathcal M_{\phi^{\dagger},\phi'}(s_V^{\dagger},s_W)
 &=\gamma^{V^\dagger}_{\mu_+,\mu_-}\gamma^W_{\mu_+',\mu_-'}S_{\mu_+\mu'_-}({\Sigma}_{V_+},{\Sigma}_{W_-})
   S_{\mu_-\mu'_+}({\Sigma}_{V_-^{\dagger}},{\Sigma}_{W_+}).
                                                        \label{eq:fourth-new-product}
\end{align}
Consequently,
\begin{equation}\label{eq:fourth-ratio}
 \mathcal M_{\phi,\phi'}(s_V,s_W)
 =
 \frac{\mathcal M_{\phi^{\dagger},\phi'}(s_V^{\dagger},s_W)}
 {S_{\mu_-\mu'_+}({\Sigma}_{V_-^{\dagger}},{\Sigma}_{W_+})}
 S_{\mu_-\mu'_+}({\Sigma}_{V_-},{\Sigma}_{W_+}).
\end{equation}
All terms on the right are attached to smaller-rank Gross--Prasad pairs.
Applying the induction hypothesis to \eqref{eq:fourth-ratio}, the right-hand
side becomes
\begin{align}
&\epsilon\!\left(\tfrac12,\phi_{V,-}\otimes\phi'_{W,+},\psi_{\mathbb C}\right)
 \frac{
 \epsilon\!\left(\tfrac12,\phi_{V,-}^{\dagger}\otimes\phi',\psi_{\mathbb C}\right)
 \epsilon\!\left(\tfrac12,\phi^{\dagger}\otimes\phi'_{W,-},\psi_{\mathbb C}\right)
 }{
 \epsilon\!\left(\tfrac12,\phi_{V,-}^{\dagger}\otimes\phi'_{W,+},\psi_{\mathbb C}\right)
 }                                                     \notag\\
&\qquad=
 \epsilon\!\left(\tfrac12,\phi_{V,-}\otimes\phi'_{W,+},\psi_{\mathbb C}\right)
 \epsilon\!\left(\tfrac12,\phi_{V,+}\otimes\phi'_{W,-},\psi_{\mathbb C}\right)
                                                        \notag\\
&\qquad=
 \epsilon\!\left(\tfrac12,\phi_{V,-}\otimes\phi',\psi_{\mathbb C}\right)
 \epsilon\!\left(\tfrac12,\phi\otimes\phi'_{W,-},\psi_{\mathbb C}\right)
 =\eta_{\phi\times\phi'}(s_V,s_W).
                                                        \label{eq:fourth-epsilon-computation}
\end{align}
Here we used only multiplicativity of epsilon factors; the two occurrences of
$\epsilon(\frac12,\phi_{V,-}^{\dagger}\otimes\phi'_{W,-},\psi_{\mathbb C})$
cancel.  This proves the fourth reduction.
\end{proof}

\subsection{Reduction to the basic cases}

\subsubsection{Reduction of the component group size}
\begin{lem}[Reduction of the component groups]\label{lem:component-group-reduction}
Assume the hypotheses of Theorems~\ref{thm:third-reduction} and
\ref{thm:fourth-reduction}.  If
\begin{equation}\label{eq:component-group-large-condition}
  |A_\phi|\geq8
  \qquad\text{or}\qquad
  |A_{\phi'}|\geq8,
\end{equation}
then the epsilon-dichotomy identities for every element of
$A_\phi\times A_{\phi'}$ reduce to epsilon-dichotomy identities for
strictly smaller-rank Gross--Prasad pairs.
\end{lem}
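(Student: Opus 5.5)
By symmetry between the two sides, we treat the case $|A_\phi|\geq 8$. The case $|A_{\phi'}|\geq8$ is handled by the same argument with the roles of the plus and minus blocks and of $V$ and $W$ interchanged; the parity convention \eqref{eq:third-reduction-parity-condition} makes the crossed pairs $(V_+,W_-)$ and $(V_-,W_+)$ play symmetric roles. So assume $A_\phi$ has at least three generators $a_1,a_2,a_3$, i.e.\ $\phi$ has at least three distinct good-parity constituents $\chi_1,\chi_2,\chi_3$. Fix $(s_V,s_W)$ and a lift, giving the endoscopic decompositions and the labeling \eqref{eq:cross-parity-third-reduction}. The goal is to show that, for each such element, either Theorem~\ref{thm:third-reduction} or Theorem~\ref{thm:fourth-reduction} applies, perhaps after replacing the lift of $s_V$ or $s_W$ by its product with the central elements $z_\phi$ or $z_{\phi'}$.

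The case analysis runs as follows. First, if the minus block $\phi_{V,-}$ contains two distinct constituents, the fourth reduction applies directly, provided the side condition $V_-\neq V$ or $W_+\neq W$ holds. Second, if $\phi_{V,-}$ is isotypic (a multiple of a single $\chi_i$, of multiplicity at most $2$ by Theorem~\ref{thm:second-reduction}) and nonzero, then $\phi_{V,+}$ contains at least two distinct constituents. If moreover $W_+\neq0$, the third reduction applies, since $V_-\neq V$ because $\phi_{V,+}\neq 0$. Third, if $W_+=0$ or $V_-=0$, we use the freedom of multiplying $s_V$ by $z_\phi$, and/or $s_W$ by $z_{\phi'}$. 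This swaps plus and minus blocks, and the parity labeling \eqref{eq:cross-parity-third-reduction} then forces a relabeling. The values of the multiplicity function at $s$ and $s z_\phi$ differ only by the known central character $\eta(z_\phi)$, which is determined by the pure inner form. On the epsilon side, $\varepsilon(\tfrac12,\phi\otimes\phi',\psi_{\mathbb C})$ gives the corresponding factor. This is exactly the bookkeeping hinted at in the proof of Theorem~\ref{thm:second-reduction} (``multiplication of some character values''). After this move we land in one of the first two cases. Since $\phi$ has at least three distinct constituents, whichever block they are distributed into, one of the two $V$-blocks contains at least two distinct constituents.

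The degenerate extremes need separate attention. These are $V_-=V$ together with $W_+=W$, and the situation where all blocks but one vanish. Here $(s_V,s_W)$ is essentially central, and the identity amounts to the stable term $S_\mu(\Sigma_V,\Sigma_W)$ via \eqref{eq:Smu-special-component-elements}, so it is not a smaller-rank identity. I would handle this by a counting argument. The function $(s_V,s_W)\mapsto\mathcal M_{\phi,\phi'}(s_V,s_W)$ is the Fourier transform on $A_\phi\times A_{\phi'}$ of the multiplicity function summed over pure inner forms. By Beuzart-Plessis \cite{BP20} this multiplicity function is supported on a single character, so it is determined by its values on any subset large enough to pin down a character of $\mathbb Z/2$-vector spaces. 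A character takes values $\pm1$ times a constant and has at most one nonzero coefficient. Knowing $\mathcal M$ at all noncentral elements, which are reduced to smaller rank, therefore determines the distinguished character, and with it the remaining values.

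The main obstacle is this final step: verifying that when $|A_\phi|\geq 8$, the elements not covered by the third and fourth reductions are few enough that the uniqueness from \cite{BP20}, together with the induction hypothesis, forces the answer. Concretely, the character must be determined by its values on the reduced elements, which requires the reduced elements to separate characters of $A_\phi\times A_{\phi'}$. I would check that $\{(s_V,s_W): s_V\notin\{1,z_\phi\}\}$ spans, and that it separates characters once $A_\phi$ has rank at least $3$. This rank-$3$ separation is exactly where the bound $8$ enters: with rank $2$, the noncentral elements together with $z_\phi$-translates do not suffice.
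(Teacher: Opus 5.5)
You follow the paper's overall strategy. You reduce as many elements as possible by Theorems~\ref{thm:third-reduction} and~\ref{thm:fourth-reduction}, and then use that $\mathcal M_{\phi,\phi'}$ and $\eta_{\phi\times\phi'}$ are both characters to fill in the rest. Your appeal to the uniqueness of \cite{BP20} to show that $\mathcal M_{\phi,\phi'}$ is a character is a point the paper leaves implicit. However, one step is circular, and the actual content of the lemma is left undone.

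\textbf{The transport along $z_\phi$ in your third case fails.} $\mathcal M_{\phi,\phi'}$ is a sum over all relevant pure inner forms, and $\eta_\pi(z_\phi)$ depends on $\alpha$. So there is no known constant relating $\mathcal M_{\phi,\phi'}(s_Vz_\phi,s_W)$ to $\mathcal M_{\phi,\phi'}(s_V,s_W)$. By multiplicativity their ratio is $\mathcal M_{\phi,\phi'}(z_\phi,1)$. By \eqref{eq:Smu-special-component-elements} this is the stable term $S_\mu(\Sigma_V,\Sigma_W)$, which is exactly the quantity you later concede is not of smaller rank. Only the simultaneous translate by $(z_\phi,z_{\phi'})$ is harmless. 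That translate is just the relabeling already encoded in \eqref{eq:third-reduction-parity-condition}, so it produces no new reducible elements.

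\textbf{The generation check is deferred and aimed at the wrong set.} Without the transport step, you must show that the elements to which one of the two reductions applies \emph{directly} generate $A_\phi\times A_{\phi'}$. Here ``directly'' means with lifts satisfying the parity convention and the nonvanishing conditions, and these constrain $s_W$ as much as $s_V$. You leave this as something to check, but $\{(s_V,s_W):s_V\notin\{1,z_\phi\}\}$ is not contained in the reducible set. Take $\phi=\chi_1\oplus\chi_2\oplus\chi_3\oplus\chi_4$, $\phi'=\chi'_1$, and the element $(e_1,b)$ with $e_1=(-1,1,1,1)$ and $b$ the nontrivial element of $A_{\phi'}$. Then $V_-$ carries only $\chi_1$ and $W_+=0$, so neither reduction applies. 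Swapping labels on both sides breaks \eqref{eq:third-reduction-parity-condition}.

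\textbf{The role of the bound $8$ is misidentified.} The complement of $\{1,z_\phi\}$ already generates at rank $2$, so separation is not where $M\geq3$ matters. In the paper, $M\geq3$ is used in two places.
\begin{enumerate}
\item It makes $e_1e_2$ non-extreme, so $\eta(e_1,\mathbf 1)=\eta(e_1e_2,s)\eta(e_2,s)$ is obtained from reducible values.
\item When $\dim W=1$, one has $W_\pm\in\{0,W\}$ and the third reduction is unavailable for odd-dimensional blocks. Then $M\geq3$ supplies $t_0=(-1,-1,-1,1,\ldots,1)$, to which the fourth reduction applies outside the even-parity subgroup.
\end{enumerate}
What is missing is this case split on $\phi'$: $|A_{\phi'}|\geq4$, $\phi'=2\phi'_1$, and $\phi'$ irreducible. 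In each case the paper exhibits explicit reducible elements and uses multiplicativity only among them.
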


\begin{proof}
By symmetry, assume $|A_\phi|\geq8$.  After the good-parity and multiplicity
reductions we may write
\[
 A_\phi\simeq\{\pm1\}^{M},\qquad M\geq3,
 \qquad
 A_{\phi'}\simeq\{\pm1\}^{N},
\]
using the coordinates associated with the distinct good-parity constituents,
as in \cite[\S2.1]{Xue23}.  We write
$\mathbf 1=(1,\ldots,1)$ and $-\mathbf1=(-1,\ldots,-1)$.  Since
$\eta_{\phi\times\phi'}$ is a character, once its value is known on a set of
generators of $A_\phi\times A_{\phi'}$, all remaining values are determined.
We distinguish the three cases appearing in the handwritten argument.

\smallskip
\noindent\emph{Case 1: $|A_{\phi'}|\geq4$.}
For
\[
 t\in A_\phi\setminus\{\mathbf1,-\mathbf1\},
 \qquad
 s\in A_{\phi'}\setminus\{\mathbf1,-\mathbf1\},
\]
both the plus and minus endoscopic blocks are nonzero.  After interchanging
the two labels of an endoscopic group when necessary, the parity convention
of Theorem~\ref{thm:third-reduction} is satisfied, and that theorem determines
$\eta_{\phi\times\phi'}(t,s)$.

Choose
\[
 e_1=(-1,1,\ldots,1),\qquad e_2=(1,-1,1,\ldots,1)
\]
and fix any non-extreme $s\in A_{\phi'}$.  The values at
$(e_1,s)$, $(e_2,s)$ and $(e_1e_2,s)$ are therefore known.  Multiplicativity
gives, for example,
\[
 \eta_{\phi\times\phi'}(e_1,\mathbf1)
 =\eta_{\phi\times\phi'}(e_1e_2,s)
  \eta_{\phi\times\phi'}(e_2,s),
 \qquad
 \eta_{\phi\times\phi'}(\mathbf1,s)
 =\eta_{\phi\times\phi'}(e_1,s)
  \eta_{\phi\times\phi'}(e_1,\mathbf1).
\]
The same calculation with the coordinate generators on both sides determines
the character on a generating set of $A_\phi\times A_{\phi'}$.  Hence all
values are determined by smaller-rank problems.

\smallskip
\noindent\emph{Case 2: $|A_{\phi'}|=2$ and $\phi'=2\phi'_1$.}
Let $b$ be the nontrivial element of $A_{\phi'}$.  Since the multiplicity of
$\phi'_1$ is two, we choose a lift of $b$ whose $+1$- and $-1$-eigenspaces in
the multiplicity space are both one-dimensional.  Thus both $W_{b,+}$ and
$W_{b,-}$ are nonzero.  Theorem~\ref{thm:third-reduction} therefore determines
$\eta_{\phi\times\phi'}(t,b)$ for every non-extreme $t\in A_\phi$.  Using
$e_1,e_2$ as in Case~1 and multiplicativity, we recover the values on
$A_\phi\times\{\mathbf1,b\}$, hence on all of
$A_\phi\times A_{\phi'}$.

\smallskip
\noindent\emph{Case 3: $|A_{\phi'}|=2$ and $\phi'$ is irreducible.}
Let again $b$ denote the nontrivial element.  In this case $\dim W=1$.  First
consider an element $t\in A_\phi$ having an even number of $-1$ coordinates.
Then both $\dim V_{t,+}$ and $\dim V_{t,-}$ are even.  If $t$ is non-extreme,
we label the one-dimensional $W$-factor as $W_-$ and put $W_+=0$; the parity
conditions of Theorem~\ref{thm:third-reduction} are then satisfied, and that
theorem determines $\eta_{\phi\times\phi'}(t,b)$.

It remains to obtain one element outside this even-parity subgroup.  Since
$M\geq3$, take
\[
 t_0=(-1,-1,-1,1,\ldots,1).
\]
For this element the minus block contains at least three distinct
constituents.  Label the $W$-endoscopy by $W_+=0$, $W_-=W$.  Then the parity
conditions in Theorem~\ref{thm:fourth-reduction} hold, and two of the distinct
constituents in $V_{t_0,-}$ may be removed.  The fourth reduction therefore
determines $\eta_{\phi\times\phi'}(t_0,b)$ from smaller-rank pairs.

The even-parity subgroup of $A_\phi$ together with $t_0$ generates
$A_\phi$.  Moreover, choosing for instance
\[
 t_1=(-1,-1,1,\ldots,1),\qquad
 t_2=(-1,1,-1,1,\ldots,1),
\]
we know the three values at $(t_1,b)$, $(t_2,b)$ and $(t_1t_2,b)$, and hence
\[
 \eta_{\phi\times\phi'}(\mathbf1,b)
 =\eta_{\phi\times\phi'}(t_1,b)
  \eta_{\phi\times\phi'}(t_2,b)
  \eta_{\phi\times\phi'}(t_1t_2,b).
\]
Consequently the values with second coordinate $\mathbf1$ are also recovered
by multiplicativity.  Thus the epsilon-dichotomy identity is determined for
every element of $A_\phi\times A_{\phi'}$.

The case $|A_{\phi'}|\geq8$ is identical with the roles of $\phi$ and
$\phi'$ interchanged.  This proves the lemma.
\end{proof}

\subsubsection{The remaining small component-group cases}
\label{subsec:remaining-small-component-groups}

We now check the remaining cases after the four reductions.  By
Lemma~\ref{lem:component-group-reduction}, we may assume
\[
  |A_\phi|\leq 4,
  \qquad
  |A_{\phi'}|\leq 4,
\]
and by Theorem~\ref{thm:second-reduction} every irreducible good-parity
constituent occurs with multiplicity at most two.  Up to relabeling the
distinct constituents, the remaining nontrivial configurations are
\begin{align*}
 \text{(i)}\quad
   &\phi=\chi_1\oplus\chi_2\quad\text{or}\quad2\chi_1,
   &\phi'&=\chi'_1,\\
 \text{(ii)}\quad
   &\phi=\chi_1\oplus2\chi_2,
   &\phi'&=2\chi'_1,\\
 \text{(iii)}\quad
   &\phi=\chi_1\oplus2\chi_2,
   &\phi'&=\chi'_1\oplus\chi'_2,\\
 \text{(iv)}\quad
   &\phi=2\chi_1\oplus2\chi_2,
   &\phi'&=\chi'_1\oplus2\chi'_2,\\
 \text{(v)}\quad
   &\phi=2\chi_1\oplus 2\chi_2,
   &\phi'&=\chi_1'
\end{align*}
Here, distinct symbols in each parameter denote distinct characters.
Case~\textup{(i)} will be checked directly in Section~\ref{sec:basic-case}.  We show that the other four configurations reduce to it.

\begin{prop}[Reduction of the remaining small cases]
\label{prop:remaining-small-cases}
Assume Conjecture~2 is known in Case~\textup{(i)}.  Then the
epsilon-dichotomy identities in Cases~\textup{(ii)}--\textup{(v)} follow
from the third reduction and the fourth reduction together with the component-group argument used
in Lemma~\ref{lem:component-group-reduction}.
\end{prop}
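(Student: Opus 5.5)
We treat each of the Cases (ii)--(v) separately, following the scheme of Lemma~\ref{lem:component-group-reduction}. We first determine the character $\eta_{\phi\times\phi'}$ on as many elements $(s_V,s_W)\in A_\phi\times A_{\phi'}$ as possible using Theorem~\ref{thm:third-reduction}, supplemented by Theorem~\ref{thm:fourth-reduction} where needed. We then recover the remaining values, in particular those on $\{\mathbf 1\}\times A_{\phi'}$ and $A_\phi\times\{\mathbf 1\}$, by multiplicativity. Throughout we exploit the multiplicity-two constituents. If a constituent $\chi$ occurs as $2\chi$, we may choose a lift of any element of the component group in which one copy of $\chi$ lies in the plus block and the other in the minus block. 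This is the device of Case~2 of Lemma~\ref{lem:component-group-reduction}. It guarantees that both endoscopic blocks are nonzero even for the extreme elements $\pm\mathbf1$, so the nontriviality condition \eqref{eq:third-reduction-nontriviality-condition} holds for every element.

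Concretely, in Case (ii) we have $\phi=\chi_1\oplus2\chi_2$ and $\phi'=2\chi'_1$. For each $s_V\in A_\phi$ and $s_W\in A_{\phi'}$ we place one copy of $\chi'_1$ in each $W$-block. Then $W_+\neq W$, and $W_+\neq0$ automatically. On the $V$-side we split the two copies of $\chi_2$ so that $V_-\neq0$. After relabeling plus and minus to meet the parity convention \eqref{eq:third-reduction-parity-condition}, Theorem~\ref{thm:third-reduction} applies to every pair directly. Case (iv), $\phi=2\chi_1\oplus2\chi_2$ and $\phi'=\chi'_1\oplus2\chi'_2$, is handled the same way, since both parameters contain a doubled constituent. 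Case (iii), $\phi=\chi_1\oplus2\chi_2$ and $\phi'=\chi'_1\oplus\chi'_2$, works as well: the doubled $\chi_2$ gives $V_\pm\neq0$ for all $s_V$. For the non-extreme $s_W$ both $W$-blocks are nonzero, and for $s_W=\pm\mathbf1$ we invoke $V_-\neq V$ instead of $W_+\neq W$. In every application the pair is replaced by one of strictly smaller rank, and we continue to reduce until we arrive at Case (i) or at pairs covered by the induction hypothesis.

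Case (v), $\phi=2\chi_1\oplus2\chi_2$ and $\phi'=\chi'_1$, is analogous to Case~3 of Lemma~\ref{lem:component-group-reduction}. Here $\dim W=1$, so the $W$-side labeling is forced to be $W_+=W$ or $W_-=W$. We split the doubled constituents so that $V_-\neq V$. Theorem~\ref{thm:third-reduction} then applies whenever the parity permits $W_+=W$. For the remaining elements we put $\chi_1$ and $\chi_2$ in the minus block, take $W_+=0$, and apply Theorem~\ref{thm:fourth-reduction}, which removes these two distinct constituents. This reduces Case (v) to $\phi^\dagger$ of dimension $2$, namely a sum of two characters, paired with $\phi'=\chi'_1$, which is exactly Case (i). Any values not reached directly are then obtained by multiplicativity, as in the lemma.

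The main obstacle is the bookkeeping of parity and labeling. For each element we must verify that some choice of lift and of plus/minus labels simultaneously satisfies \eqref{eq:third-reduction-parity-condition} and the nontriviality hypotheses, and that the resulting smaller parameters are again of good parity. In particular, the twists by $\xi$ in Theorem~\ref{thm:third-reduction} must give a legitimate smaller configuration that is already known. We would first tabulate, for each case, the dimensions $\dim V_\pm$ and $\dim W_\pm$ for every lift. If some element fails every lift, we expect to recover it by multiplicativity from three known values, as in the identity $\eta(\mathbf1,b)=\eta(t_1,b)\eta(t_2,b)\eta(t_1t_2,b)$.
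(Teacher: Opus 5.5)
Your overall plan matches the paper's. You evaluate the identity at chosen elements of $A_\phi\times A_{\phi'}$ by the third reduction (plus the fourth in Case~(v)), land in Case~(i) or in smaller pairs, and recover the remaining values because both sides are characters. Your Case~(v) follows the pattern of Case~3 of Lemma~\ref{lem:component-group-reduction}. The step that fails is the splitting device on which you build Cases~(ii)--(iv).

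A lift of $s\in A_\phi$ must act by $\pm1$. On the factor $O(2,\mathbb C)$ of $S_\phi$ attached to a doubled constituent $2\chi$, the coordinate of $s$ is the determinant. If that coordinate is $+1$, the lift restricted to this factor is an element of $SO(2,\mathbb C)$ squaring to $1$, i.e.\ $\pm I_2$. Both copies of $\chi$ then lie in the same block. You can put one copy in each block only when the coordinate is $-1$ (a reflection). This is why the paper, in Case~2 of Lemma~\ref{lem:component-group-reduction}, splits $2\phi'_1$ only for the nontrivial element $b$. In particular, your Case~(ii) step ``for each $s_W$ we place one copy of $\chi'_1$ in each $W$-block, so $W_+\neq W$ and $W_+\neq0$'' is impossible for $s_W=\mathbf 1$: there the $W$-blocks are $W$ and $0$.

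The gap can be repaired in either of two ways.
\begin{itemize}
\item Keep your element-by-element approach but fix the lifts. For $s_W=\mathbf 1$ in Case~(ii), take $W_+=W$ and $W_-=0$, and get the nontriviality condition from $V_-\neq V$; parity then forces $V_-$ to be the one-dimensional block. Whenever the coordinate at a doubled constituent is trivial, use the lift that puts both copies into the block opposite the other constituents. Examples are $(1,-I_2)$ for $s_V=\mathbf 1$ when $\phi=\chi_1\oplus2\chi_2$, and $(I_2,-I_2)$ when $\phi=2\chi_1\oplus2\chi_2$. This keeps both $V$-blocks nonzero, including in your Case~(v) sentence about ``splitting the doubled constituents''. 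With these lifts and the free relabeling the paper allows, the third reduction does apply at every element in (ii)--(iv).
\item Follow the paper. In Case~(ii), fix $s_W=-1$, where the split lift of $2\chi'_1$ exists. Apply the third reduction only at the three independent elements $((1,-1),-1)$, $((-1,1),-1)$, $((-1,-1),-1)$. Their product is $((1,1),-1)$, and their pairwise products are the remaining elements with $s_W=\mathbf 1$, so multiplicativity gives everything else.
\end{itemize}
Either way, the lift-by-lift bookkeeping you postpone to your last paragraph is exactly the content of this proposition and has to be written out.
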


\begin{proof}
We use the coordinates on $A_\phi$ and $A_{\phi'}$ associated with the
distinct good-parity constituents and choose representatives of the
component-group elements with the indicated dimensions of the endoscopic
blocks. We only supply the arguments for \textup{(ii)} and \textup{(iv)}, the other two cases are similar.

\smallskip
\noindent\emph{Case \textup{(ii)}.}
Here
\[
  \phi=\chi_1\oplus2\chi_2,
  \qquad
  \phi'=2\chi'_1.
\]
Consider first
\[
  s_1=((1,-1),-1)\in A_\phi\times A_{\phi'}.
\]
We choose the corresponding endoscopic decomposition so that
\[
  \dim V_+=2,\qquad \dim V_-=1,
  \qquad
  \dim W_-=1,\qquad \dim W_+=1.
\]
Thus the third reduction reduces the identity at $s_1$ to
Case~\textup{(i)}.  For
\[
  s_2=((-1,1),-1)
\]
we may choose the same dimension pattern,
\[
  \dim V_+=2,\qquad \dim V_-=1,
  \qquad
  \dim W_-=1,\qquad \dim W_+=1,
\]
and the third reduction again reduces the desired identity to
Case~\textup{(i)}.  Finally, for
\[
  s_3=((-1,-1),-1)
\]
we choose
\[
  \dim V_+=1,\qquad \dim V_-=2,
  \qquad
  \dim W_-=1,\qquad \dim W_+=1,
\]
so the same reduction applies.  Since
\[
  s_1s_2s_3=((1,1),-1),
\]
the value at the remaining element is determined by the same
component-group character argument as in the proof of
Lemma~\ref{lem:component-group-reduction}.  Hence Case~\textup{(ii)} is
reduced to Case~\textup{(i)}.

\smallskip
\noindent\emph{Case \textup{(iv)}.}
Now let
\[
  \phi=2\chi_1\oplus2\chi_2,
  \qquad
  \phi'=\chi'_1\oplus2\chi'_2.
\]
Choose
\[
  s_W=(1,-1)\qquad\text{or}\qquad s_W=(-1,1)
\]
so that the associated endoscopic decomposition of $W$ has
\[
  \dim W_-=1,
  \qquad
  \dim W_+=2.
\]
For
\[
  s_{1,V}=(1,-1)\in A_\phi
\]
choose a representative satisfying
\[
  \dim V_+=3,
  \qquad
  \dim V_-=1.
\]
The third reduction therefore reduces the identity at
$(s_{1,V},s_W)$ to Case~\textup{(i)}.  The same argument applies to
\[
  s_{2,V}=(-1,1),
\]
for which we again choose
\[
  \dim V_+=3,
  \qquad
  \dim V_-=1.
\]
For
\[
  s_{3,V}=(-1,-1)
\]
we choose instead
\[
  \dim V_+=2,
  \qquad
  \dim V_-=2,
\]
and the third reduction again reduces the corresponding identity to a
smaller GGP pair. Since
\[
  s_{1,V}s_{2,V}s_{3,V}=(1,1)\in A_\phi,
  \qquad
  s_W s_W s_W=s_W\in A_{\phi'}
\]
The remaining value is determined by the same argument used in the
proof of Lemma~\ref{lem:component-group-reduction}.

\end{proof}

\section{The basic cases}\label{sec:basic-case}

We now verify the basic cases in dimensions
$(\dim W,\dim V)=(1,2)$, using the epsilon-dichotomy
normalization fixed in Section~\ref{sec:reduction-basic-cases}.

For $k\in\mathbb{Z}$ and $t\in\mathbb{R}$, put
\[
  \chi_{k+it}(z)
  =\left(\frac{z}{\sqrt{z\bar z}}\right)^k
   (z\bar z)^{it/2},
  \qquad z\in\mathbb{C}^\times,
\]
and write $\chi_k=\chi_{k+0i}$.

There are two relevant pairs:
\begin{equation}\label{eq:two-relevant-pairs}
 (\mathrm{U}(1,0),\mathrm{U}(1,1)),\qquad
 (\mathrm{U}(0,1),\mathrm{U}(0,2)).
\end{equation}
We fix
\[
  \phi_1=\chi_{2m},\qquad m\in\mathbb{Z},
\]
whose corresponding representation on either one-dimensional unitary group
will again be denoted by $\chi_m$.

\smallskip
\noindent\emph{The discrete series.}
We first consider the discrete parameter
\begin{equation}\label{eq:basic-discrete-parameter}
 \phi_2=\chi_{\ell+n-1}\oplus\chi_{\ell-n+1},
 \qquad n\ge2,\quad \ell\equiv n\pmod2.
\end{equation}
Then $A_{\phi_2}\simeq(\mathbb Z/2\mathbb Z)^2$.  The two packet members on
$\mathrm{U}(1,1)$ are $D_n^+\boxtimes\chi_\ell$ and
$D_n^-\boxtimes\chi_\ell$, while the relevant member on $\mathrm{U}(0,2)$ is
\begin{equation}\label{eq:compact-member}
 F_{n,\ell}
 ={\operatorname{Sym}^{n-2}(\mathbb{C}^2)}
   \otimes \mathrm{det}^{\frac{-n+\ell}{2}+1}.
\end{equation}
These packet descriptions follow from the real local Langlands correspondence;
see \cite[Theorem~2.1]{Ato20}.

To calculate the restrictions to $\mathrm{U}(1)$, we use the surjective homomorphism
\begin{equation}\label{eq:sl2-u11-map}
 \begin{aligned}
  \mathrm{SL}_2(\mathbb{R})\times S^1&\longrightarrow\mathrm{U}(1,1),\\
  (A,z)&\longmapsto zCAC^{-1},
 \end{aligned}
 \qquad
 C=\frac1{\sqrt2}
   \begin{pmatrix}1&i\\1&-i\end{pmatrix}.
\end{equation}
Its kernel is $\{(I,1),(-I,-1)\}$, and hence
\[
 \mathrm{U}(1,1)\simeq
 \bigl(\mathrm{SL}_2(\mathbb{R})\times S^1\bigr)/\{(I,1),(-I,-1)\}.
\]
The standard subgroup $\mathrm{U}(1)\subset\mathrm{U}(1,1)$, given by
$z\mapsto\operatorname{diag}(z,1)$, lifts as
\[
 e^{i\theta}\longmapsto
 \left(
  \begin{pmatrix}
   \cos(\theta/2)&-\sin(\theta/2)\\
   \sin(\theta/2)& \cos(\theta/2)
  \end{pmatrix},
  e^{\frac{i\theta}{2}}
 \right).
\]
Consequently, the $\mathrm{U}(1)$-weights are
\begin{align}
 \left.D_n^+\boxtimes\chi_\ell\right|_{\mathrm{U}(1)}:
 &\quad \frac{n+\ell}{2},\frac{n+\ell}{2}+1,
        \frac{n+\ell}{2}+2,\ldots,
 \label{eq:weight-D-plus}\\
 \left.D_n^-\boxtimes\chi_\ell\right|_{\mathrm{U}(1)}:
 &\quad \frac{-n+\ell}{2},\frac{-n+\ell}{2}-1,
        \frac{-n+\ell}{2}-2,\ldots,
 \label{eq:weight-D-minus}\\
 \left.F_{n,\ell}\right|_{\mathrm{U}(1)}:
 &\quad \frac{-n+\ell}{2}+1,
        \frac{-n+\ell}{2}+2,\ldots,
        \frac{n+\ell}{2}-1.
 \label{eq:weight-compact}
\end{align}
In each case, the multiplicity is one.  Hence
\begin{equation}\label{eq:weight-criterion}
 \operatorname{Hom}_{\mathrm{U}(1)}(\pi_2\boxtimes\chi_m,\mathbb{C})\ne0
 \quad\Longleftrightarrow\quad
 -m\text{ is a $\mathrm{U}(1)$-weight of }\pi_2.
\end{equation}

Let $e_+=(-1,1)$ and $e_-=(1,-1)$ be the generators of
$A_{\phi_2}$ corresponding respectively to
$\chi_{\ell+n-1}$ and $\chi_{\ell-n+1}$, and let $e_1=-1$ be the generator
of $A_{\phi_1}$.  There are three possibilities.

First, suppose that
\[
 m\ge\frac{n-\ell}{2}.
\]
By \eqref{eq:weight-D-minus}, the unique distinguished pair is
\[
 (\chi_m,D_n^-\boxtimes\chi_\ell)
 \quad\text{on}\quad
 (\mathrm{U}(1,0),\mathrm{U}(1,1)).
\]
The corresponding character of $A_{\phi_2}\times A_{\phi_1}$ is
\begin{equation}\label{eq:character-discrete-minus}
 \eta(e_+,1)=-1,\qquad
 \eta(e_-,1)=-1,\qquad
 \eta(1,e_1)=1.
\end{equation}
Indeed,
\begin{align*}
 &\varepsilon\left(\frac12,
   \chi_{\ell+n-1}\otimes\chi_{2m},\psi_{\mathbb C}\right)=-1,
 \varepsilon\left(\frac12,
   \chi_{\ell-n+1}\otimes\chi_{2m},\psi_{\mathbb C}\right)=-1,\\
 &\varepsilon\left(\frac12,
   (\chi_{\ell+n-1}\oplus\chi_{\ell-n+1})
       \otimes\chi_{2m},\psi_{\mathbb C}\right)=1.
\end{align*}
Here both exponents $\ell+n-1+2m$ and $\ell-n+1+2m$ are positive odd
integers.

Second, suppose that
\[
 m\le-\frac{n+\ell}{2}.
\]
Then the unique distinguished pair is
\[
 (\chi_m,D_n^+\boxtimes\chi_\ell)
 \quad\text{on}\quad
 (\mathrm{U}(1,0),\mathrm{U}(1,1)).
\]
The corresponding character is
\begin{equation}\label{eq:character-discrete-plus}
 \eta(e_+,1)=1,\qquad
 \eta(e_-,1)=1,\qquad
 \eta(1,e_1)=1.
\end{equation}
In this range, both exponents $\ell+n-1+2m$ and
$\ell-n+1+2m$ are negative, so all three epsilon values are equal to $1$.

Finally, suppose that
\[
 -\frac{n+\ell}{2}<m<\frac{n-\ell}{2}.
\]
By \eqref{eq:weight-compact}, the unique distinguished pair is
\[
 (\chi_m,F_{n,\ell})
 \quad\text{on}\quad
 (\mathrm{U}(0,1),\mathrm{U}(0,2)).
\]
Its component-group character is
\begin{equation}\label{eq:character-discrete-compact}
 \eta(e_+,1)=-1,\qquad
 \eta(e_-,1)=1,\qquad
 \eta(1,e_1)=-1.
\end{equation}
Indeed, $\ell+n-1+2m>0$ and $\ell-n+1+2m<0$, and therefore
\begin{align*}
 &\varepsilon\left(\frac12,
   \chi_{\ell+n-1}\otimes\chi_{2m},\psi_{\mathbb C}\right)=-1,
 \varepsilon\left(\frac12,
   \chi_{\ell-n+1}\otimes\chi_{2m},\psi_{\mathbb C}\right)=1,\\
 &\varepsilon\left(\frac12,
   (\chi_{\ell+n-1}\oplus\chi_{\ell-n+1})
       \otimes\chi_{2m},\psi_{\mathbb C}\right)=-1.
\end{align*}
Thus the epsilon-dichotomy identity holds for the discrete parameter.

\smallskip
\noindent\emph{The limits of discrete series.}
We now consider
\[
 \phi_2=\chi_\ell\oplus\chi_\ell,
 \qquad \ell\equiv1\pmod2.
\]
Here $A_{\phi_2}\simeq\mathbb Z/2\mathbb Z$, and the packet consists
of the two limits of discrete series $D_1^+\boxtimes\chi_\ell$ and
$D_1^-\boxtimes\chi_\ell$ on $\mathrm{U}(1,1)$; there is no
representation on a compact pure inner form.  These packet descriptions
follow from \cite[Theorem~2.1]{Ato20}.

Let $e_\ell$ be the generator of $A_{\phi_2}$.  The
$\mathrm{U}(1)$-weights of the two packet members are
\begin{align*}
 \left.D_1^+\boxtimes\chi_\ell\right|_{\mathrm{U}(1)}:
 &\quad \frac{\ell+1}{2},\frac{\ell+1}{2}+1,\ldots,\\
 \left.D_1^-\boxtimes\chi_\ell\right|_{\mathrm{U}(1)}:
 &\quad \frac{\ell-1}{2},\frac{\ell-1}{2}-1,\ldots.
\end{align*}
If
\[
 m\le-\frac{\ell+1}{2},
\]
then the distinguished pair is
$(\chi_m,D_1^+\boxtimes\chi_\ell)$ on
$(\mathrm{U}(1,0),\mathrm{U}(1,1))$, and
\[
 \eta(e_\ell,1)=1,\qquad \eta(1,e_1)=1.
\]
This agrees with
\begin{align*}
 \varepsilon\left(\frac12,\chi_\ell\otimes\chi_{2m},
                  \psi_{\mathbb C}\right)&=1,\qquad
 \varepsilon\left(\frac12,(\chi_\ell\oplus\chi_\ell)
                  \otimes\chi_{2m},\psi_{\mathbb C}\right)=1.
\end{align*}
If
\[
 m\ge-\frac{\ell-1}{2},
\]
then the distinguished pair is
$(\chi_m,D_1^-\boxtimes\chi_\ell)$ on
$(\mathrm{U}(1,0),\mathrm{U}(1,1))$, and
\[
 \eta(e_\ell,1)=-1,\qquad \eta(1,e_1)=1.
\]
Now
\begin{align*}
 \varepsilon\left(\frac12,\chi_\ell\otimes\chi_{2m},
                  \psi_{\mathbb C}\right)&=-1,\qquad
 \varepsilon\left(\frac12,(\chi_\ell\oplus\chi_\ell)
                  \otimes\chi_{2m},\psi_{\mathbb C}\right)=1.
\end{align*}
Since $\ell$ is odd, these two ranges exhaust all $m\in\mathbb Z$.

\smallskip
\noindent\emph{The principal series.}
The remaining tempered parameters are
\[
 \phi_2=\chi_\ell\oplus\chi_\ell,
 \qquad \ell\equiv0\pmod2,
\]
and
\[
 \phi_2=\chi_{\ell+it}\oplus\chi_{\ell-it},
 \qquad \ell\in\mathbb Z,\quad t>0.
\]
In both cases $A_{\phi_2}=1$.  The corresponding representations on
$\mathrm{U}(1,1)$ are denoted by $I(\ell,0)$ and $I(\ell,t)$,
respectively; see \cite[Theorem~2.1]{Ato20}.

For $I(\ell,0)$ and $I(\ell,t)$, every integral
$\mathrm{U}(1)$-weight occurs with multiplicity one.  Hence, for every
$m\in\mathbb Z$, the distinguished pair is
\[
 (\chi_m,I(\ell,0))\quad\text{or}\quad(\chi_m,I(\ell,t))
\]
on $(\mathrm{U}(1,0),\mathrm{U}(1,1))$.
Since $A_{\phi_2}$ is trivial, it remains only to check the character
of $A_{\phi_1}$.  In the first case,
\[
 \varepsilon\left(\frac12,(\chi_\ell\oplus\chi_\ell)
                  \otimes\chi_{2m},\psi_{\mathbb C}\right)=1.
\]
In the second case, the identity
\[
 \varepsilon\left(\frac12,\xi,\psi_{\mathbb C}\right)
 \varepsilon\left(\frac12,\xi^{c,-1},\psi_{\mathbb C}\right)=1
\]
from \cite[Section~5]{GGP12} gives
\[
 \varepsilon\left(\frac12,(\chi_{\ell+it}\oplus\chi_{\ell-it})
                  \otimes\chi_{2m},\psi_{\mathbb C}\right)=1.
\]
Thus, the character of $A_{\phi_1}$ is trivial, as required for the
representation $\chi_m$ on $\mathrm{U}(1,0)$.  This completes the
verification of the epsilon-dichotomy identity in the basic cases.

\appendix

\end{document}